\newtheorem{thm}{Theorem}[section]
\newtheorem{lem}[thm]{Lemma}
\newtheorem{prp}[thm]{Proposition}
\theoremstyle{definition}
\newtheorem{dfn}[thm]{Definition}
\theoremstyle{remark}
\newtheorem{qst}{Question}
\newtheorem{rem}[thm]{Remark}
\newtheorem{exl}[thm]{Example}
\newcommand{\R}{\mathbb{R}}
\newcommand{\Z}{\mathbb{Z}}
\newcommand{\Orb}{\mathcal{O}}
\newcommand{\Or}{\mathrm{O}}
\newcommand{\oB}{U}
\newcommand{\cB}{B}
\newcommand{\To}{\rightarrow}
\title[Orbifolds from a metric viewpoint]{Orbifolds from a metric viewpoint}
\author{Christian Lange}
\address{Christian Lange, Mathematisches Institut der Universit\"at zu K\"oln, Weyertal 86-90, 50931 K\"oln, Germany}
\email{clange@math.uni-koeln.de}
\thanks{The author was partially supported by the DFG funded project SFB/TRR 191.}
\subjclass{57R18, 14H30, 51F99}
\begin{document}

\begin{abstract} We characterize Riemannian orbifolds and their coverings in terms of metric geometry. In particular, we show that the metric double of a Riemannian orbifold along the closure of its codimension one stratum is a Riemannian orbifold and that the natural projection is an orbifold covering.
\end{abstract}

\maketitle

\section{Introduction}

Orbifolds were introduced by Satake in the $50$s under the name of V-manifolds \cite{MR0079769,Satake} and rediscovered by Thurston in the $70$s, when also the term ``orbifold'' was chosen, cf. \cite{MR2883685}. Thurston moreover defined the notion of coverings of orbifolds and showed that the usual theory of coverings and fundamental groups works in the setting of orbifolds \cite{Thurston}. This theory was later generalized to the setting of groupoids by Haefliger \cite{MR0100269,MR0755163} (see also \cite{MR1744486}, and \cite{MR1466622,MR1950948} for the relation between groupoids and orbifolds). Orbifolds can be endowed with a Riemannian metric. In this form they for instance arise as quotients of isometric Lie group actions on Riemannian manifolds \cite{{MR2719410}} or as Gromov-Hausdorff limits \cite{MR1145256}. Our purpose is to characterize such Riemannian orbifolds and their coverings in terms of metric geometry. Besides being interesting on its own right we describe some applications of this perspective. 

Recall that a \emph{length space} (or \emph{intrinsic metric space})  is a metric space in which the distance between any pair of points can be realized as the infimum of the lengths of all rectifiable paths connecting these points \cite{MR1835418}. The following definition of a Riemannian orbifold was proposed to us by Alexander Lytchak.

\begin{dfn}\label{dfn:orbifold} A \emph{Riemannian orbifold} of dimension $n$ is a length space $\Orb$ such that for each point $x \in \Orb$ there exist an open neighborhood $U$ of $x$ in $\Orb$ and a connected Riemannian manifold $M$ of dimension $n$ together with a finite group $G$ of isometries of $M$ such that $U$ and $M/G$ are isometric with respect to the induced length metrics.
\end{dfn}

Here $M$ is endowed with the induced length metric and $M/G$ with the corresponding \emph{quotient metric}, which measures the distance between orbits in $M$. In Section \ref{sec:basics_Rieman_orbi} we explain in which sense this definition is equivalent to the original definition of a Riemannian orbifold.

To obtain a metric notion of orbifold coverings we follow an idea by Lytchak to view submetries with discrete fibers as branched coverings \cite{MR1938523}. We denote the closed balls in a metric space $X$ as $\cB_r(x)$ and the open balls as $\oB_r(x)$. A map $p:X \To Y$ between metric spaces is called \emph{submetry} if $p(\cB_r(x))=\cB_r(p(x))$ holds for all $x\in X$ and all $r\geq 0$. It is known that a submetry between Riemannian manifolds is a Riemannian submersion \cite{MR1800594}. In particular, a submetry between Riemannian manifolds of the same dimension is a local isometry. Using this notion we can characterize \emph{coverings of Riemannian orbifolds} in metric terms as follows.

\begin{thm}\label{thm:covering_prop} For a map $p: \Orb \To Y$ from an $n$-dimensional Riemannian orbifold $\Orb$ to a metric space $Y$ the following properties are equivalent.
\begin{enumerate}
\item $Y$ is an $n$-dimensional Riemannian orbifold and $p$ is a covering of Riemannian orbifolds in the sense of Thurston, cf. Definition \ref{dfn:riem_orb_covering}.
\item $p$ has discrete fibers, is onto, locally $1$-Lipschitz and each point $y \in Y$ has a neighborhood $U$ such that the restriction of $p$ to $p^{-1}(U)$ is a submetry with respect to the restricted metrics.
\end{enumerate}

Moreover, if $\Orb$ is complete, then each of the conditions $(i)$ and $(ii)$ is satisfied if and only if $p$ is a submetry with discrete fibers. Also, in case $(ii)$ the space $Y$ is locally an Alexandrov space. If it is $n$-dimensional then the assumption on the discreteness of the fibers of $p$ is superfluous. 
\end{thm}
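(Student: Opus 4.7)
For (i)$\Rightarrow$(ii), a Thurston covering is by definition locally modelled on canonical quotient maps $M/H \to M/G$ with $H\le G$ finite groups of isometries of a Riemannian $n$-manifold $M$. I would check directly that any such model is a submetry between the quotient length spaces: a closed $r$-ball in $M/H$ about an orbit $Hz$ equals the image of the closed $r$-ball $\overline B_r(z)\subseteq M$, and its further image in $M/G$ is the closed $r$-ball about the orbit $Gz$. Discrete (in fact finite) fibers and the local $1$-Lipschitz property are then immediate from the same picture.

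For the reverse implication, fix $y\in Y$ with a neighborhood $V$ on which $p$ is a submetry, choose $x\in p^{-1}(y)$, and an orbifold chart $\pi\colon (M,\tilde x)\to (U,x)$ with $U\cong M/G$ and $G$ finite. The composition $\tilde p := p\circ\pi\colon M\to V$ is then a submetry with discrete fibers from a Riemannian $n$-manifold. The core of the argument is a local reconstruction: around each $\tilde x\in M$ one produces a neighborhood $W$ and a finite group $\Gamma\supseteq G_{\tilde x}$ of isometries of $W$ such that the fibers of $\tilde p|_W$ coincide with the $\Gamma$-orbits; equivalently, $\tilde p$ factors isometrically as $W\to W/\Gamma\hookrightarrow V$. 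To construct $\Gamma$, I would use that the fiber through $\tilde x$ is finite in a compact ball, that submetries admit $1$-Lipschitz liftings of short curves, and that these liftings, matched with the submetry identity $d(\tilde p(a),\tilde p(b))=\inf\{d(a,b')\colon \tilde p(b')=\tilde p(b)\}$, produce germs of Riemannian isometries of $M$ permuting the sheets of $\tilde p$; finiteness of the resulting pseudogroup follows from discreteness of the fibers together with the manifold structure of $M$. Once this is in place, $V$ is locally isometric to $M/\Gamma$, which gives $Y$ its orbifold structure and realizes $p$ as a Thurston covering because $G_{\tilde x}\le\Gamma$.

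The completeness addendum reduces to a Hopf--Rinow-type globalization: given a closed ball in $Y$, I would iteratively extend geodesic liftings in $\Orb$ using the local submetry property and completeness to produce a preimage ball, showing that $p$ itself is a submetry; the reverse implication is immediate. The local Alexandrov assertion is then a consequence of the identification $V\cong M/\Gamma$, since finite isometric quotients of Riemannian manifolds are Alexandrov. Finally, when $Y$ is $n$-dimensional, a Hausdorff-dimension count applied to the submetry $\tilde p\colon M\to V$ forces the fibers to be zero-dimensional, hence discrete, so that hypothesis in (ii) becomes redundant.

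The main obstacle I expect is Step~C above: extracting the finite isometry group $\Gamma$ from the bare submetry-with-discrete-fibers data. Turning the local lifting structure of $\tilde p$ into genuine germs of Riemannian isometries of $M$, and then assembling these germs into an honest group action on a common neighborhood rather than a mere pseudogroup, will require careful analysis near points of $M$ where several preimages of a nearby point in $V$ collide; once this local orbifold picture is available, the remaining assertions of the theorem assemble from standard facts about quotient metrics, orbifold coverings, and Alexandrov geometry.
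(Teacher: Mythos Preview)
Your outline for (i)$\Rightarrow$(ii) and for the completeness and dimension addenda is essentially in line with the paper. The substantial divergence is in (ii)$\Rightarrow$(i).

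The paper does \emph{not} attempt to build the finite group $\Gamma$ directly from the lifting structure of the submetry, as you propose. Instead it argues by induction on $n$: after reducing to a submetry $p\colon M\to Y$ from a manifold with a fixed point $x$ over $y$, it restricts $p$ to the distance spheres $S_r(x)\to S_r(y)$, which are again submetries with finite fibers between $(n-1)$-dimensional spaces. By the induction hypothesis these sphere maps are orbifold coverings; since $S_r(x)$ is a \emph{simply connected} manifold, Thurston's theorem on universal orbifold coverings (the paper's Theorem~\ref{thm:existence_universal_orbifold}) makes each of them Galois, producing a finite group $G$ acting isometrically on $S_r(x)$ with quotient $S_r(y)$. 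This $G$ is then transported to the ball $B_R(x)$ via the exponential map and shown to act by isometries there. Thus the group is not extracted from local liftings at all; it comes from covering-space theory applied one dimension down. The paper also treats the infinite-fiber case separately (Lemmas~\ref{lem:sub_geo_to_geo_local}--\ref{lem:sub_geo_to_geo_2}) to obtain a radius $r$ that works uniformly over an entire fiber; your outline does not address this.

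Your direct approach is not obviously wrong, but the obstacle you flag is real and, as stated, unresolved: passing from $1$-Lipschitz geodesic liftings to germs of Riemannian isometries, and then from a pseudogroup of germs to an honest finite group acting on a fixed ball, is precisely the hard step, and you give no mechanism for it. There is also a circularity risk in your treatment of the Alexandrov assertion: you deduce that $Y$ is locally Alexandrov from the identification $V\cong M/\Gamma$, but both the paper's argument and any version of yours that uses differentials or geodesic liftings of submetries \`a la Lytchak needs $Y$ to be locally Alexandrov \emph{before} that identification is available. The paper obtains this up front from the general fact that the target of a submetry inherits a local lower curvature bound from the source.
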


In particular, the theorem applies to coverings of Riemannian manifolds. Note that the additional characterization indeed requires the completeness assumption: in Example \ref{exl:covering_but_no_sub} we construct a covering of noncomplete Riemannian manifolds which is not a submetry. Moreover, a map $p: X \To \Orb$ from an Alexandrov space $X$ to a Riemannian orbifold $\Orb$ satisfying the properties stated in $(ii)$ does not need to be a covering of Riemannian orbifolds. For instance, a double covering from a circle of length $4\pi/3$ to a circle of length $2\pi/3$ induces such a map between the corresponding Euclidean cones. The proof of Theorem \ref{thm:covering_prop} works by induction on the dimension and relies on the covering space theory of orbifolds (i.e. on Theorem \ref{thm:existence_universal_orbifold}) as well as on structural results about submetries between Alexandrov spaces due to Lytchak \cite{MR1938523}.
 
A particular example of a Riemannian orbifold covering is given by the quotient map of an isometric, proper action by a discrete group on a Riemannian orbifold. Another canonical construction works as follows. The set of points in a Riemannian orbifold $\Orb$ which have a neighborhood that is isometric to the quotient of a Riemannian manifold by an isometric reflection is called the \emph{codimension 1 stratum} of $\Orb$. Its closure coincides with the boundary of $\Orb$ in the sense of Alexandrov geometry \cite{MR1185284}. The double of $\Orb$ along this closure admits a natural metric with respect to which the two copies are isometrically embedded (see Section \ref{sec:double_covering}). In Section \ref{sec:double_covering} we prove the following statement.

\begin{prp} \label{prp:metric_double} The metric double of a Riemannian orbifold along the closure of its codimension $1$ stratum is a Riemannian orbifold and the natural projection to $\Orb$ is a covering of Riemannian orbifolds.
\end{prp}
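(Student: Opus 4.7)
The plan is to verify the conditions of Theorem~\ref{thm:covering_prop}(ii) locally for the natural projection $p: \hat\Orb \to \Orb$. Write $Z \subset \Orb$ for the closure of the codimension-1 stratum. Fix $\hat x \in \hat\Orb$ and set $x = p(\hat x) \in \Orb$. I will choose a linearized orbifold chart around $x$: an isometry of a neighborhood $U$ of $x$ with $B/G$, where $B \subset \R^n$ is a small open ball and $G \leq \Or(n)$ is a finite linear group fixing the origin (cf.~Section~\ref{sec:basics_Rieman_orbi}). Write $R \subset G$ for the reflections and $W = \langle R \rangle \trianglelefteq G$ for the Coxeter subgroup they generate; then $Z \cap U$ corresponds under the chart to $\pi_G\bigl(\bigcup_{r \in R} \mathrm{Fix}(r)\bigr)$, and $x \in Z$ iff $R \neq \emptyset$. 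If $R = \emptyset$, then $p^{-1}(U)$ is the disjoint union of two isometric copies of $U$ and the conditions hold trivially, so I will assume $R \neq \emptyset$.

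Next, fix a chamber $C_0$ of $B \setminus \bigcup_{r \in R} \mathrm{Fix}(r)$. Since $W$ acts simply transitively on chambers, one obtains a semidirect decomposition $G = W \rtimes G_0$, where $G_0$ is the $G$-stabilizer of $C_0$. The sign character $\epsilon_W : W \to \{\pm 1\}$ of the Coxeter group $W$ is $G$-invariant (conjugation permutes reflections), so setting $\epsilon(wh) := \epsilon_W(w)$ for $w \in W$, $h \in G_0$ defines a homomorphism $\epsilon: G \to \{\pm 1\}$. Put $G^+ := \ker \epsilon$, an index-$2$ subgroup. The central claim to establish is that a neighborhood of $\hat x$ in $\hat\Orb$ is isometric to $B/G^+$ with its quotient metric, and that under this isometry $p$ corresponds to the canonical quotient $B/G^+ \to B/G$.

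To verify the claim, I would first argue that any $g \in G \setminus G^+$ fixing some $b \in B$ must have $b$ on a reflection hyperplane: otherwise $g$ preserves the chamber whose interior contains $b$, but every chamber stabilizer in $G$ is conjugate to $G_0 \subseteq G^+$. Consequently $B/G^+ \to B/G$ is a two-fold cover branched exactly over $Z \cap U$. Choosing a simple reflection $r_0 \in R$ for $C_0$, the set $\overline{C_0 \cup r_0 C_0}$ with its $G_0$-identifications realizes $B/G^+$ as two copies of $U = \overline{C_0}/G_0$ glued along the reflection wall $\overline{C_0} \cap r_0 \overline{C_0}$ in $Z$, which is exactly the topological model of the metric double $\hat U$. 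The two length metrics then agree because any rectifiable curve can be subdivided at its wall crossings and compared chamber by chamber, reducing to the common induced metric on $U$.

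Once this isometry is in place, $p$ near $\hat x$ is locally the quotient map $B/G^+ \to B/G$, which is surjective, $1$-Lipschitz, a submetry (quotient by a finite isometric action), and has fibers of cardinality at most two. Theorem~\ref{thm:covering_prop}(ii) then yields both that $\hat\Orb$ is a Riemannian orbifold and that $p$ is a covering of Riemannian orbifolds. The main obstacle I expect is isolating $G^+$ correctly: the sign character $\epsilon_W$ generally admits several homomorphism extensions from $W$ to $G$, and not all correspond to the metric double. One must pick the extension determined by the Coxeter chamber decomposition; in particular, the naive guess $G \cap \SOr(n)$ typically differs from $G^+$, because orientation-reversing elements of $G$ need not be reflections.
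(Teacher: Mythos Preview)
Your strategy—constructing the index-$2$ subgroup $G^+=\ker\epsilon$ directly via Coxeter theory and identifying $B/G^+$ with the metric double—differs genuinely from the paper's. The paper proceeds by induction on the dimension: it applies the induction hypothesis to the unit sphere $S_{\hat x}/G$, invokes Theorem~\ref{thm:existence_universal_orbifold} to obtain an index-$2$ subgroup $H<G$ with $2(S_{\hat x}/G)\cong S_{\hat x}/H$, radially extends via the exponential map, and then carries out the metric verification through Lemmas~\ref{lem:restricts_to_isometry_lem}--\ref{lem:restricts_to_isometry}. Your route is more constructive and avoids both the induction and the appeal to the universal-covering theorem, at the cost of importing standard facts about finite reflection groups (simple transitivity on chambers, the decomposition $G=W\rtimes G_0$).

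Two points need repair. First, the description of $B/G^+$ as ``$\overline{C_0\cup r_0 C_0}$ with its $G_0$-identifications, glued along the single wall $\overline{C_0}\cap r_0\overline{C_0}$'' is not correct: $G_0$ stabilizes $C_0$ but in general permutes the adjacent chambers $rC_0$, so it does not act on $\overline{C_0\cup r_0 C_0}$; and the double is glued along all of $Z$ (the image of the full wall set $\partial C_0$), not one wall. What you should argue instead is that $W^+$ has exactly two orbits on chambers and that $G_0$ preserves each, so $(B/G^+)\setminus p^{-1}(Z)$ has two components exchanged by the deck involution—this identifies $B/G^+$ with the topological double. Second, the metric comparison (``subdivide at wall crossings and compare chamber by chamber'') is the substantive step and your proposal does not execute it: one must show that each half $X_i\subset B/G^+$ maps isometrically onto $B/G$ under $p$ (reflect paths via the involution, as in Lemma~\ref{lem:double metric_estimate}) and that distances between points in opposite halves are computed as infima through $Z$ (as in Lemma~\ref{lem:restricts_to_isometry_lem}(ii)). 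This is precisely the content of Lemmas~\ref{lem:connecting_path_double}--\ref{lem:restricts_to_isometry}, and it is needed regardless of how $G^+$ was produced.
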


Like the proof of Theorem \ref{thm:covering_prop} the proof of Proposition \ref{prp:metric_double} works by induction on the dimension and relies on the covering space theory of orbifolds (in the form of Theorem \ref{thm:existence_universal_orbifold}). 

\subsection{Applications}

Theorem \ref{thm:covering_prop} is applied in a paper by R.~Mendes and M.~Radeschi in which they establish a close connection between submetries with smooth fibers from the unit sphere $\mathbb{S}(V)$ in a finite-dimensional real vector space $V$ and Laplacian subalgebras in the polynomial ring $\mathbb{R}[V]$ \cite{LaplacianSubmetries}. The latter are subalgebras which contain $r:=\sum_i x_i^2$, where $x_i$ is an orthonormal basis of the dual space $V^*$, and are invariant under the action of the Laplace operator $\Delta$ on $\mathbb{R}[V]$. Proposition \ref{prp:metric_double} can moreover be conveniently applied in the solution of the topological question of when the quotient of $\R^n$ by a finite subgroup of the orthogonal group $\Or(n)$ is a topological manifold with boundary \cite{Lange1}. In dimension $2$ it has also been applied in the papers \cite{Lange2} and \cite{Lange3} about closed geodesics on $2$-orbifolds.

\subsection{Structure of the paper} 

In Section \ref{sec:basics_Rieman_orbi} we show the equivalence of Definition \ref{dfn:orbifold} with the usual definition of Riemannian orbifolds. Moreover, we recall basic notions about (Riemannian) orbifolds and their coverings. In Section \ref{sec:metric_cover} we prove Theorem \ref{thm:covering_prop}. The proof in the case where $p$ has finite fibers is simpler and is carried out in the first part of that section. In Section~\ref{sec:double_covering} we prove Proposition \ref{prp:metric_double}. 
\newline
\newline
\emph{Acknowledgements.} I would like to thank Claudio Gorodski, Alexander Lytchak, Ricardo Mendes and Marco Radeschi for useful discussions. I am moreover grateful to the referee for comments that helped to improve the exposition.

The author was partially supported by the DFG funded project SFB/TRR 191. The support is gratefully acknowledged.

\section{Basics on Riemannian orbifolds and their coverings}\label{sec:basics_Rieman_orbi}

\subsection{Definition of Riemannian orbifolds}\label{sub:def_Riem_orbifold}

An $n$-dimensional smooth orbifold in the sense of Thurston is defined in terms of a Hausdorff space $X$ and a collection of data $\{(\tilde U_i, G_i, U_i,p_i)\}_{i\in I}$, a so-called \emph{atlas}, consisting of $n$-dimensional smooth manifolds $\tilde U_i$ on which finite groups $G_i$ act smoothly, an open covering $\{U_i\}_{i\in I}$ of $X$, and projections $p_i : \tilde U_i \To U_i$ that induce homeomorphisms $\overline{p}_i:\tilde U_i/G_i \To U_i$. The \emph{charts} $(\tilde U_i, G_i, U_i,p_i)$ must satisfy the following compatibility condition. For two points $x_i \in \tilde U_i$ and $x_j \in \tilde U_j$ with $p_i(x_i)=p_j(x_j)$ there should exist open neighborhoods $\tilde V_i$ of $x_i$ in $\tilde U_i$ and $\tilde V_j$ of $x_j$ in $\tilde U_j$ and a diffeomorphism $\phi : \tilde V_i \To \tilde V_j$ with $p_i=p_j \circ \phi$ on $\tilde V_i$. Sometimes an equivarience condition on the transition map $\phi$ is demanded, which is however already implied by the condition that $p_i=p_j \circ \phi$ holds on $\tilde V_i$ as can be seen with the help of a compatible Riemannian metric. Since we will also need the respective statement later, let us record it here.

\begin{lem} \label{lem:group_dete_deck_trans} Let $M$ be a Riemannian manifold on which a finite group $G$ acts isometrically and let $p:M \To M/G$ be the quotient map. Then any isometry $\phi:U \To V$ between open, connected subsets of $M$ with $p=p\circ \phi$ on $U$ is the restriction of the action of some element $g\in G$ on $M$.
\end{lem}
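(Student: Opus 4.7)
The plan is to partition $U$ according to which element of $G$ locally realizes $\phi$. For each $g \in G$, set $E_g := \{x \in U : \phi(x) = gx \text{ and } d\phi(x) = dg(x)\}$. Each $E_g$ is closed in $U$ by continuity of $\phi$, $d\phi$, and of the $G$-action. Moreover $E_g$ is open: if $x \in E_g$, then both $\phi$ and $g$ are smooth isometries near $x$ (with $\phi$ smooth by Myers--Steenrod) sharing value and differential at $x$, so they coincide on a normal neighborhood via $\phi(\exp_x v) = \exp_{\phi(x)}(d\phi(x) v) = \exp_{gx}(dg(x) v) = g\exp_x v$, after which $d\phi = dg$ automatically throughout. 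Thus each $E_g$ is clopen in $U$; once I establish $U = \bigcup_{g \in G} E_g$, connectedness of $U$ forces some $E_g$ to equal $U$, which is exactly the lemma.

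The real work is to show that for every $x \in U$ there exists $g \in G$ with both $\phi(x) = gx$ and $d\phi(x) = dg(x)$. By hypothesis, pick any $g \in G$ with $\phi(x) = gx$, and put $A := dg(x)^{-1} \circ d\phi(x) \in \Or(T_xM)$. For $v \in T_xM$ sufficiently small, set $y := \exp_x v$; then $\phi(y) \in Gy$ and, by continuity, $\phi(y)$ lies close to $\phi(x) = gx$. I will argue that for $v$ small the only points of $Gy$ close to $gx$ are those of the form $gh \, y$ with $h \in G_x$: indeed, any coset $g'G_x \neq gG_x$ contributes an orbit point $g'x \neq gx$ at bounded distance from $gx$, and this persists for $g'y$ once $y$ is close enough to $x$. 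Hence $\phi(y) = g h_v\, y$ for some $h_v \in G_x$ depending on $v$. Expanding both sides through the exponential map gives $d\phi(x) v = dg(x)\, dh_v(x)\, v$, that is, $Av = dh_v(x) v$. Therefore $T_xM = \bigcup_{h \in G_x} \ker\!\bigl(A - dh(x)\bigr)$, and since a real vector space is not a finite union of proper subspaces, some $h \in G_x$ satisfies $A = dh(x)$. Setting $g' := gh$, we get $\phi(x) = g'x$ and $d\phi(x) = dg'(x)$, so $x \in E_{g'}$.

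The main obstacle I anticipate is the orbit-comparison step in paragraph two: controlling the neighborhood of $x$ precisely enough that the only orbit elements near $gx$ come from $gG_x$, and then passing from the pointwise identity $\phi(y) = g h_v y$ to the linear identity $Av = dh_v(x) v$ via the exponential map (which uses Myers--Steenrod to know $\phi$ is smooth). The remaining ingredients -- continuity for closedness, exponential rigidity for openness, and the connectedness conclusion -- are then routine.
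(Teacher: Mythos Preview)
Your proof is correct, but it takes a noticeably different and more elaborate route than the paper's.

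The paper's argument is two lines: pick a \emph{regular} point $x \in U$ (such points are dense, so one exists in the open set $U$); near $x$ the quotient map $p$ is a local isometry, so the condition $p\circ\phi = p$ forces $\phi$ to agree with some fixed $g \in G$ on a whole neighborhood of $x$; then cite the standard fact that a Riemannian isometry on a connected manifold is determined by its restriction to any open set.

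Your approach instead works at an arbitrary point $x$, possibly singular, and proves the clopen decomposition $U = \bigcup_g E_g$ from scratch. The substantive extra work is all in your second paragraph: at a point with nontrivial stabilizer $G_x$ you must identify which coset element realizes $\phi$ up to first order, and you do this by the vector-space argument $T_xM = \bigcup_{h\in G_x}\ker(A - dh(x))$, forcing $A = dh(x)$ for some $h$. This is a nice self-contained argument and avoids any appeal to the existence or density of regular points, but that generality is not needed here since $U$ is open and regular points are dense. In effect, at a regular point your argument collapses to the paper's (since $G_x$ is trivial there), so the paper is simply choosing the point where your machinery becomes unnecessary.

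Both approaches ultimately rest on the same rigidity principle (value plus differential determines an isometry), but the paper invokes it as a black box while you re-derive it through the $E_g$ decomposition.
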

\begin{proof} Let $x \in U$ be a regular point for the covering $p:M \To M/G$. Since $p$ is a local isometry in a neighborhood of $x$ there exists some $g\in G$ such that the action of $g$ and $\phi$ coincide in this neighborhood. Now the claim follows from the fact that a Riemannian isometry of a connected space is determined by its local behaviour \cite[Lem.~4.2]{MR1138207}.
\end{proof}

Hence, in order to show that a Riemannian orbifold in the sense of Definition \ref{dfn:orbifold} has a natural structure of a smooth orbifold one only needs to prove the following lemma. Its proof is based on the fact that a finite subgroup $G<\Or(n)$ is determined up to conjugation by the metric quotient $S^{n-1}/G$. This is proven by hand in \cite[Lem.~1]{MR1935486} and can also be deduced from the covering space theory of Riemannian orbifolds, see Remark \ref{rem:clarify_citation}. Recall that a ball in a Riemannian manifold centered at $x$ is called \emph{normal}, if its closure is contained in the diffeomorphic image of an open neighborhood of the origin in the tangent space $T_x M$ under the exponential map.

\begin{lem}\label{lem:metric_det_group} Let $\oB_r(x)\subset M$ and $\oB_r(\bar x)\subset \bar{M}$ be normal balls in $n$-dimensional Riemannian manifolds $M$ and $\bar M$. Suppose finite groups $G$ and $\bar{G}$ act isometrically and effectively on $M$ and $\bar M$ and fix the points $x$ and $\bar x$, respectively. Suppose further that the quotients $\oB_r(x)/G$ and $\oB_r(\bar x)/\bar G$ are isometric. Then there exists an isometry $\phi: \oB_r(x) \To \oB_r(\bar x)$ that conjugates the action of $G$ on $\oB_r(x)$ to the action of $\bar G$ on $\oB_r(\bar x)$.
\end{lem}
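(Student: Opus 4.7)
The strategy is to extract from the given isometry of quotients a linear conjugation between $G$ and $\bar G$ inside $\Or(n)$ via the cited classification of finite orthogonal group actions on spheres, and then to build the required isometry $\phi$ by a lifting argument. Concretely, let $\psi:\oB_r(x)/G\to \oB_r(\bar x)/\bar G$ be the given isometry and write $\psi([x])=[\bar y]$ for some $\bar y\in \oB_r(\bar x)$. Passing to tangent cones under $\psi$ yields an isometry $T_xM/G\cong T_{\bar y}\bar M/\bar G_{\bar y}$, where $\bar G_{\bar y}\leq \bar G$ is the isotropy of $\bar y$, and hence an isometry of spaces of directions $S^{n-1}/G\cong S^{n-1}/\bar G_{\bar y}$. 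Comparing the volumes of these quotients forces $|G|=|\bar G_{\bar y}|$, and the same reasoning applied to $\psi^{-1}$ gives $|G|=|\bar G|$; combined with $\bar G_{\bar y}\leq\bar G$, this yields $\bar G_{\bar y}=\bar G$, so $\bar y$ is fixed by $\bar G$. Parallel transport along the $\bar G$-fixed geodesic from $\bar x$ to $\bar y$ then conjugates the isotropy representations of $\bar G$ at the two points inside $\Or(n)$, producing an isometry $S^{n-1}/G\cong S^{n-1}/\bar G$ with respect to the isotropy actions at $\bar x$. An application of \cite[Lem.~1]{MR1935486} now yields a linear isometry $L:T_xM\to T_{\bar x}\bar M$ that conjugates the isotropy $G$-action at $x$ to the isotropy $\bar G$-action at $\bar x$.

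I would then build $\phi$ by lifting $\psi$ along the projections $p:\oB_r(x)\to \oB_r(x)/G$ and $\bar p:\oB_r(\bar x)\to \oB_r(\bar x)/\bar G$. Since $\bar y$ is $\bar G$-fixed, the fiber $\bar p^{-1}([\bar y])$ is the single point $\{\bar y\}$, so any loop at $x$ in $\oB_r(x)$ lifts via $\psi\circ p$ to a loop at $\bar y$ in $\oB_r(\bar x)$, making the path-lift $\phi:\oB_r(x)\to \oB_r(\bar x)$ well-defined. On the regular parts of both balls, where $p$ and $\bar p$ are Galois Riemannian coverings, $\phi$ is a local isometric lift of $\psi$, and it extends by continuity across the singular locus to a global isometry. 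Since any isometry of normal balls sends the metric (Chebyshev) center to the metric center, $\phi(x)=\bar x$ and hence $\bar y=\bar x$. Equivariance of $\phi$ is forced by the uniqueness of path-lifts, and by Lemma \ref{lem:group_dete_deck_trans} the linear isomorphism $d\phi_x$ coincides with $L$ up to an element normalizing $G$, so $\phi$ realizes the desired conjugation of actions.

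The main obstacle is the well-definedness and bijectivity of the path-lift $\phi$, especially in the presence of a possibly complicated singular locus; the crucial ingredients are the triviality of the monodromy of loops at $x$---which comes from $\bar y$ being $\bar G$-fixed---and the simple connectedness of the normal ball $\oB_r(x)$.
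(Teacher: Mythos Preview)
The path-lifting construction of $\phi$ is the genuine gap. The projection $\bar p:\oB_r(\bar x)\to \oB_r(\bar x)/\bar G$ is \emph{not} a covering map over the singular locus, so lifts of paths starting at the fixed point $\bar y$ are not unique. Your monodromy argument only shows that any lift of a loop based at $[\bar y]$ is again a loop (since the fiber over $[\bar y]$ is a single point); it does \emph{not} show that a given path has a unique lift. For instance, if $\bar G=\Z/2$ acts on $\R^2$ by the rotation by $\pi$, a radial ray in the cone quotient starting at the cone point has two distinct lifts, in opposite directions. Consequently the value $\phi(z)$, defined as the endpoint of some lift of $\psi\circ p\circ\gamma$ for a path $\gamma$ from $x$ to $z$, depends on the choice of lift and is not well-defined. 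Restricting to the regular parts does not immediately rescue the argument either: there $\bar p$ is indeed a Galois covering, but the regular part of $\oB_r(x)$ is typically not simply connected (already for $G=\Z/2$ acting by rotation on $\R^2$ it has $\pi_1\cong\Z$), so the standard lifting criterion requires checking a subgroup condition on fundamental groups that you have not addressed; the ``simple connectedness of the normal ball'' is not the relevant input.

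The paper circumvents this by building $\phi$ directly from the linear data rather than by lifting $\psi$: one sets $\phi=\exp_{\bar x}\circ L\circ\exp_x^{-1}$, which is a diffeomorphism of normal balls conjugating the actions because $L$ conjugates the linearized actions and the exponential maps are equivariant. The induced map on quotients is then identified with $\psi$ (via the fact that an isometry intertwines the orbifold exponential maps), so $\phi$ is a Riemannian isometry on the dense regular part; a short length-space argument then shows $\phi$ and $\phi^{-1}$ are $1$-Lipschitz, hence $\phi$ is a global isometry. Incidentally, your detour through $\bar y$, the volume comparison, and parallel transport is unnecessary: the metric-center observation you invoke at the end already gives $\psi([x])=[\bar x]$ at the outset.
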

\begin{proof} Since the balls $\oB_r(x)$ and $\oB_r(\bar x)$ are normal, the cosets of $x$ and $\bar x$ in $\oB_r(x)/G$ and $\oB_r(\bar x)/\bar G$ are the unique points whose $r$-neighborhoods cover all of $\oB_r(x)/G$ and $\oB_r(\bar x)/\bar G$, respectively. Therefore an isometry between $\oB_r(x)/G$ and $\oB_r(\bar x)/\bar G$ has to map the coset of $x$ to the coset of $\bar x$. It follows that also the spaces of directions at $x$ and $\bar x$, i.e. the quotients of the unit spheres in the tangent spaces at $x$ and $\bar x$ by the linearized actions of $G$ and $\bar G$, are isometric. Hence, by \cite[Lem.~1]{MR1935486} these linearized actions are conjugated by an isometry. Using the exponential map we obtain a diffeomorphism $\phi: \oB_r(x) \To \oB_r(\bar x)$ that conjugates the actions of $G$ and $\bar G$, and that descends to an isometry between the respective quotient spaces.

Since the projections to the quotients are local isometries over the regular part (i.e. the set of points with trivial isotropy), the diffeomorphism $\phi$ is a Riemannian isometry there. Moreover, for any pair of points $x_0,x_1 \in \oB_r(x)$ and any $\varepsilon>0$ one can find a path $\gamma :[0,1] \To \oB_r(x)$ and a subdivision $0=t_0<t_1<\dots<t_k=1$ such that $\gamma(0)=x_0$, $\gamma(1)=x_1$ and $\gamma_{|(t_i,t_{i+1})}$, $i=0,\ldots, k-1$, is a smooth curve in the regular part with $L(\gamma)<d(x_0,x_1)+\varepsilon$. Since $\phi$ is a Riemannian isometry on the regular part, it follows that $\phi: \oB_r(x) \To \oB_r(\bar x)$ is $1$-Lipschitz. The same argument applied to $\phi^{-1}$ shows that $\phi$ is an isometry as claimed.
\end{proof}

Hence, a Riemannian orbifold in the sense of Definition \ref{dfn:orbifold} admits a smooth orbifold structure and a compatible Riemannian structure that in turn induces the metric structure. Conversely, every paracompact smooth orbifold admits a compatible Riemannian structure that turns it into a Riemannian orbifold \cite[Ch.~III.1]{MR1744486}. In this sense the two definitions are equivalent.

\begin{rem}\label{rem:clarify_citation}
Alternatively, one can argue by induction on the dimension to show that two isometric actions of finite groups $G_1$ and $G_2$ on simply connected Riemannian manifolds $M_1$ and $M_2$, respectively, are conjugated if the corresponding quotient spaces are isometric. Using the induction assumption one shows as in the proof of lemma \ref{lem:metric_det_group} that an isometry between $M_1/G_1$ and $M_2/G_2$ is a covering of Riemannian orbifolds in the sense of Definition \ref{dfn:riem_orb_covering}. Then the claim follows from Theorem \ref{thm:existence_universal_orbifold}.
\end{rem}

\subsection{Orbifold coverings}\label{sub:orbifold_coverings}

The concept of a covering orbifold was introduced by Thurston \cite[Def.~13.2.2]{Thurston}. In the Riemannian setting his definition reads as follows.

\begin{dfn}[Thurston] \label{dfn:riem_orb_covering} A \emph{covering orbifold} of a Riemannian orbifold $\Orb$ is a Riemannian orbifold $\hat\Orb$ together with a surjective map $p : \hat\Orb \To \Orb$ that satisfies the following property. Each point $x\in \Orb$ has a neighborhood $U$ isometric to some $M/G$, as in Definition \ref{dfn:orbifold}, for which each connected component $U_i$ of $p^{-1}(U)$ is isometric to $M/G_i$ for some subgroup $G_i<G$ such that the following diagrams commute
\[
	\begin{xy}
		\xymatrix
		{
		   U_i \ar[r]^{\sim \; \;} \ar[d]_{p}  & M/G_i \ar[d]    \\
		   U \ar[r]^{\sim \; \;}  & M/G   
		}
	\end{xy}
\]
We refer to the map $p : \hat\Orb \To \Orb$ in the definition as an \emph{orbifold covering} or a \emph{covering of Riemannian orbifolds}. 
\end{dfn}

\begin{figure}
	\centering
		\def\svgwidth{1.0\textwidth}
		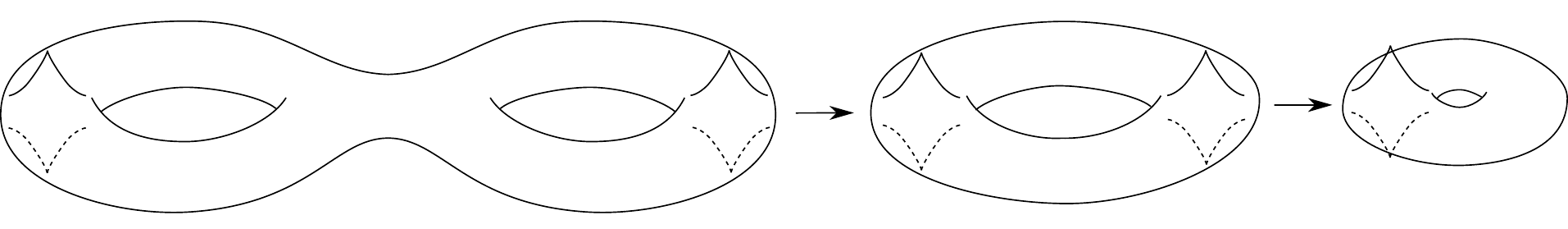
	\caption{Composition of two two-fold orbifold coverings.}
	\label{fig:covering_example}
\end{figure}

\begin{rem} If $p:\hat \Orb \To \Orb$ is a covering of smooth orbifolds in the sense of \cite[Def.~13.2.2]{Thurston}, then any Riemannian orbifold metric on $\Orb$ lifts to a Riemannian orbifold metric on $\hat \Orb$ with respect to which the covering is a covering of Riemannian orbifolds in the above sense.
\end{rem}

\begin{rem} The $G_i$ appearing in Definition \ref{dfn:riem_orb_covering} do not have to be isomorphic. For instance, consider the $4$-fold orbifold covering sketched in Figure \ref{dfn:riem_orb_covering}. The singular points with local group $\Z_2$ on the base orbifold have three preimages. Two of them are singular with local group $\Z_2$ and the third one is regular. This can only happen if the covering is not \emph{Galois} in the sense of the following definition.
\end{rem}

\begin{dfn}\label{dfn:riem_orb_covering_deck} The \emph{deck transformation group} of a covering $p : \hat\Orb \To \Orb$ of Riemannian orbifolds is defined as the group of all isometries of $\hat \Orb$ that leave the fibers of $p$ invariant. The covering $p$ is called \emph{Galois} if the deck transformation group acts transitively on the fibers of $p$.
\end{dfn}
Let us record the following statement.
\begin{lem}\label{lem:regular_cover_quotient_metric} Suppose $p : \hat\Orb \To \Orb$ is a Galois covering of Riemannian orbifolds. Then the metric on $\Orb$ coincides with the quotient metric with respect to the action of the deck transformation group $G$ on $\hat \Orb$.
\end{lem}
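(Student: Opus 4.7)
The plan is to observe that the Galois hypothesis turns $p$ into a bijection $\hat\Orb/G\To\Orb$ (surjective by assumption, injective because $G$ acts transitively on each fiber) and to show that this bijection is an isometry when $\hat\Orb/G$ carries the quotient metric $\bar d(Gx,Gy)=\inf_{g\in G}d_{\hat\Orb}(x,gy)$; this infimum defines a genuine metric because the $G$-orbits, being fibers of a covering, are discrete.

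For the inequality $d_\Orb(p(x),p(y))\le\bar d(Gx,Gy)$ I would show that $p$ is globally $1$-Lipschitz. By Definition \ref{dfn:riem_orb_covering}, every point of $\Orb$ admits a model neighborhood $U\cong M/H$ whose preimage components $U_j$ are isometric to $M/H_j$ for subgroups $H_j<H$, with $p|_{U_j}$ corresponding to the canonical projection $M/H_j\To M/H$. Since $H_j\subset H$, the intrinsic quotient distances satisfy $d_{M/H}\le d_{M/H_j}$, so this projection is $1$-Lipschitz (in fact a submetry). Shrinking $U$ so that its restricted metric from $\Orb$ coincides with the intrinsic quotient metric of $M/H$, we get that $p$ is locally $1$-Lipschitz; since $\hat\Orb$ is a length space, any two points are joined by rectifiable curves of arbitrarily near-minimal length that can be finitely subdivided into pieces contained in such local models, so $p$ is $1$-Lipschitz globally. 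Applied to $x$ and $gy$ and minimised over $g\in G$, this yields the desired bound.

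The reverse inequality is the content-rich step and proceeds by lifting curves. Given $\varepsilon>0$, choose a rectifiable $\gamma:[0,1]\To\Orb$ from $p(x)$ to $p(y)$ with $L(\gamma)<d_\Orb(p(x),p(y))+\varepsilon$ and, using a Lebesgue number argument for a finite cover by model neighborhoods, a subdivision $0=t_0<\dots<t_k=1$ such that each $\gamma([t_i,t_{i+1}])$ lies in one model neighborhood $U\cong M/H$. I then inductively produce a lift $\tilde\gamma$ with $\tilde\gamma(0)=x$ and $L(\tilde\gamma|_{[t_i,t_{i+1}]})=L(\gamma|_{[t_i,t_{i+1}]})$: given $\tilde\gamma(t_i)$ in some component $U_j\cong M/H_j$ of $p^{-1}(U)$, I lift $\gamma|_{[t_i,t_{i+1}]}$ first to a curve in $M$ of equal length via the finite Riemannian quotient $M\To M/H$ and then push it down to $M/H_j$, selecting the branch starting at $\tilde\gamma(t_i)$. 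By the Galois hypothesis $\tilde\gamma(1)\in p^{-1}(p(y))=Gy$, say $\tilde\gamma(1)=gy$, so
\[
\bar d(Gx,Gy)\le d_{\hat\Orb}(x,gy)\le L(\tilde\gamma)=L(\gamma)<d_\Orb(p(x),p(y))+\varepsilon,
\]
and letting $\varepsilon\To 0$ finishes the argument.

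I expect the delicate step to be the length-preserving lift of a curve from the finite Riemannian quotient $M/H$ up to $M$. The cleanest justification is that the quotient map of a finite isometric group action on a Riemannian manifold is a submetry between length spaces, for which horizontal lifts of equal length exist; a hands-on alternative is to approximate $\gamma$ by curves meeting the singular stratum only at the subdivision points, lift each piece isometrically through the local covering $M\To M/H$ over the regular part, and concatenate. Everything else is bookkeeping once the subdivision is fine enough and the Galois identification of fibers with $G$-orbits has been invoked.
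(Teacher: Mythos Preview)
Your proposal is correct and follows essentially the same two-inequality strategy as the paper: one direction from $p$ being $1$-Lipschitz, the other from length-preserving lifts of curves. The only notable difference is that the paper shortcuts the ``delicate step'' you flag by lifting paths that are \emph{piecewise minimizing} rather than arbitrary rectifiable curves, which makes the local lift through $M\to M/H$ immediate (a minimizing geodesic in $M/H$ is the image of a minimizing geodesic in $M$); your more general lifting argument works too but is, as you note, slightly more involved.
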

\begin{proof} Since $\hat \Orb$ is a length space, the distance between two $G$-orbits can be realized as the infimum of paths connecting the two orbits. Moreover, since the map $p$ is $1$-Lipschitz it follows that the metric of $\Orb$ is majorized by the quotient metric. The converse follows since $\Orb$ is a length space and since e.g. paths in $\Orb$ which are piecewise minimizing can be arc-length-preservingly lifted to $\hat \Orb$.
\end{proof}

In the following we sometimes omit the term \emph{Riemannian} if a property of a Riemannian orbifold is actually a property of the underlying smooth orbifold. An orbifold covering $p : \hat\Orb \To \Orb$ is called \emph{universal} if, given a choice of points $\hat x_0 \in \hat \Orb$ and $x_0 \in \Orb$ with $\varphi(\hat x_0)=x_0$, for any orbifold covering $p' : \Orb' \To \Orb$ and a base point $x_0'$ with $\varphi'(x_0')=x_0$, there exists an orbifold covering $q:\hat \Orb \To \Orb'$ with $q(\hat x_0)=x_0'$ and $p=p'\circ q$. The following statement is due to Thurston.

\begin{thm}[Thurston]\label{thm:existence_universal_orbifold} For every orbifold $\Orb$ there exists a universal orbifold covering $p : \hat\Orb \To \Orb$, which is Galois. Moreover, if $M$ is a simply connected manifold and $p : M \To \Orb$ is a covering of orbifolds, then it is a universal covering.
\end{thm}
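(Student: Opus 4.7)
The plan is to follow Thurston's original construction of the universal cover via orbifold paths and homotopies. Fix a regular basepoint $x_0 \in \Orb$ and define an \emph{orbifold path} starting at $x_0$ as a continuous path $\gamma:[0,1]\to\Orb$ together with a subdivision $0=t_0<\cdots<t_k=1$ and a choice of lifts $\tilde\gamma_i:[t_i,t_{i+1}]\to\tilde U_i$ into manifold charts $(\tilde U_i,G_i,\pi_i)$ of $\Orb$ with $\pi_i\circ\tilde\gamma_i=\gamma|_{[t_i,t_{i+1}]}$. Declare two such data equivalent if they are related by refinement of the subdivision, passage to smaller compatible charts, or replacement of a lift $\tilde\gamma_i$ by $g\cdot\tilde\gamma_i$ for some $g\in G_i$ with the corresponding transition at $t_{i+1}$ adjusted. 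Define \emph{orbifold homotopies} between orbifold paths with fixed endpoints by lifting a two-parameter family into charts, and set $\pi_1^{\orb}(\Orb,x_0)$ to be the group of orbifold-homotopy classes of orbifold loops at $x_0$ under concatenation.

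Define the candidate universal cover $\hat\Orb$ as the set of orbifold-homotopy classes of orbifold paths in $\Orb$ starting at $x_0$, with endpoint projection $p:\hat\Orb\to\Orb$. Equip $\hat\Orb$ with a smooth orbifold atlas as follows: for each chart $(\tilde U,G,U,\pi)$ of $\Orb$ and each class $[\gamma]\in p^{-1}(U)$ whose terminal lift lands at a point $\tilde y\in\tilde U$, construct a chart around $[\gamma]$ by the map $\tilde z\mapsto[\gamma\ast\pi(\sigma)]$, where $\sigma$ is any path in $\tilde U$ from $\tilde y$ to $\tilde z$. The resulting isotropy group at $[\gamma]$ is the stabilizer in $G$ of the set of terminal lifts of $\gamma$ ending at $\tilde y$, which is a subgroup of the isotropy of $y=\pi(\tilde y)$. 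Pulling back the Riemannian metric through these charts turns $p$ into a covering of Riemannian orbifolds in the sense of Definition \ref{dfn:riem_orb_covering}.

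Next, the group $\pi_1^{\orb}(\Orb,x_0)$ acts on $\hat\Orb$ by precomposition, $[\alpha]\cdot[\gamma]=[\alpha\ast\gamma]$. This action preserves $p$ and is transitive on each fiber, since any two orbifold paths ending at the same point differ by an orbifold loop at $x_0$; hence $p$ is Galois with deck group $\pi_1^{\orb}(\Orb,x_0)$. For the universal property, given any other orbifold covering $p':\Orb'\to\Orb$ with a chosen lift $x_0'$ of $x_0$, the local model in Definition \ref{dfn:riem_orb_covering} yields a unique lift to $\Orb'$ of each orbifold path starting at $x_0$; this lift is invariant under orbifold homotopy, so $[\gamma]\mapsto\gamma'(1)$ defines an orbifold covering $q:\hat\Orb\to\Orb'$ with $p=p'\circ q$. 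Finally, if $p:M\to\Orb$ is an orbifold covering from a simply connected manifold $M$, then orbifold paths and homotopies in $M$ reduce to ordinary paths and homotopies, so simple connectedness of $M$ forces the induced $q:\hat\Orb\to M$ to be a bijection, and hence an orbifold isomorphism.

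The main obstacle lies in the second step: one must verify that the proposed charts on $\hat\Orb$ are mutually compatible, that the resulting space is Hausdorff, and that the local group actions glue into a coherent orbifold structure. Compatibility is ultimately controlled by Lemma \ref{lem:group_dete_deck_trans}, which forces any chart transition intertwining the projections to agree locally with an element of the relevant local group, so that the equivalence relation defining orbifold paths interacts predictably with chart changes. Once this bookkeeping is in place, the Galois property and the universal property follow formally from unique path-lifting, and the manifold case reduces to classical covering space theory.
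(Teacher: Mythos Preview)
Your sketch is correct in outline, and it follows one of the standard approaches---essentially the orbifold-loop construction described by Thurston and spelled out in more detail in \cite{MR2060653}, which the paper cites but does not pursue. The paper takes a genuinely different route via the orthonormal frame bundle: it defines $\pi_1^{\orb}(\Orb,x_0)$ through paths in the manifold $F\Orb$ whose endpoints lie in the fiber over $x_0$, modulo the $\Or(n)$-action and homotopy (Definition~\ref{def:orbi_fund_grou}); it then builds the universal cover by taking the ordinary manifold cover $X\to F_0\Orb$ corresponding to the subgroup $\ker(j)\subset\pi_1(F_0\Orb)$ and collapsing connected components of fibers of the composite $X\to\Orb$ (Lemma~\ref{lem:covering_induces_orbi-cover}, Proposition~\ref{prp:covering_induces_orbi-cover}). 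The second claim about simply connected manifolds is then a consequence of Lemma~\ref{lem:orbifol_fundamental_proj} ($\pi_1^{\orb}(M)\cong\pi_1(M)$ for manifolds) together with Corollary~\ref{cor:universal_is_simply_connected}.

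The trade-off is exactly the one you identify as your ``main obstacle''. Your approach is more elementary and closer to the classical picture, but the verification that the charts on $\hat\Orb$ are compatible, that the quotient is Hausdorff, and that the local groups glue coherently is genuinely delicate and is where most treatments become sketchy. The frame-bundle approach sidesteps this entirely: since $F\Orb$ is already a manifold, all the covering-space theory is classical, and the orbifold structure on the quotient $X/{\sim}$ comes for free from the almost-free $\SOr(n)$-action (via \cite{MR2719410}). What the paper loses is directness---one has to set up the Sasaki--Mok--O'Neill metric and check equivariance---but what it gains is that no orbifold-specific homotopy bookkeeping is ever needed.
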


There are several ways of proving Theorem \ref{thm:existence_universal_orbifold}. For instance, in \cite{Thurston} Thurston describes a fiber product for orbifolds and uses it to obtain a universal covering orbifold as an inverse limit. In \cite{MR2060653} a proof using the notion of ``orbifold loops'' similarly to ordinary loops in case of topological spaces is sketched. Alternatively, a universal covering of a Riemannian orbifold $\Orb$ can be constructed as follows. Consider the orthonormal frame bundle $F \Orb$, which is a manifold, and let $\tilde{F}$ be its universal covering. Let $\sim$ be the equivalence relation on $\tilde{F}$ which identifies two points when they lie in the same connected component of the preimage of a fiber of $F \Orb \To \Orb$. Then the induced map $\tilde{F}/\sim \To \Orb$ is a universal covering of Riemannian orbifolds. The details of this line of reasoning are worked out in an earlier arXiv version of this paper.

As pointed out in the introduction, in the noncomplete case a covering in the sense of Thurston does not need to be a submetry. In fact, this property already fails for ordinary coverings of Riemannian manifolds as the following example illustrates.

\begin{exl}\label{exl:covering_but_no_sub} Consider the subset $X:=(\R^2- (\{0\} \times \R) \cup (\{0\}\times \bigcup_{n=1}^{n=\infty} (1/10^n,2/10^n))$ of $\R^2$ with its restricted Riemannian metric. Let $\tilde X$ be the universal cover of $X$ with the lifted Riemannian metric for which the covering $p:\tilde X \To X$ becomes a local isometry. Endow $X$ and $\tilde X$ with the induced length metrics. With respect to this metric the distance between the points $x=(-1,0)$ and $y=(1,0)$ in $X$ is $2$. Now let $\tilde x$ and $\tilde y$ be lifts of $x$ and $y$ in $\tilde X$. The point $\tilde y$ is the endpoint of a lift $\tilde \gamma$ of a path $\gamma:[0,1]\To X$ with $\gamma(0)=x$ and $\gamma(1)=y$. Moreover, any path between $\tilde x$ and $\tilde y$ projects to a path on $X$ that is homotopic to $\gamma$ with its endpoints fixed. Since the distance between $\tilde x$ and $\tilde y$ is the infimum of the length of all curves between $\tilde x$ and $\tilde y$, it follows that this distance is strictly larger than $2$. In particular, the point $y$ is not contained in the ball $p(\cB_2(\tilde x))$ and so $p$ is not a submetry.
\end{exl}

However, using the fact that covering maps have the curve lifting property, one can show that the covering constructed in Example \ref{exl:covering_but_no_sub} is a weak submetry. Here a map $f:X \To Y$ between metric spaces $X$ and $Y$ is called \emph{weak submetry} if $p(\oB_r(x))=\oB_r(p(x))$ holds for all $x\in X$ and all $r\geq 0$. A submetry is always a weak submetry. The converse is true if the space $X$ is proper, i.e. its bounded closed balls are compact. More generally, in Lemma \ref{lem:weak_submetry_implied} we show that a covering of Riemannian orbifolds is a weak submetry. We could not decide yet whether the converse is always true or not, even in the manifold case.

\begin{qst} Is a weak submetry between Riemannian manifolds (orbifolds) of the same dimension always a covering map?
\end{qst}

\subsection{Some more useful terminologies}\label{sub:terminologies}

Let us recall some more terminology and basic facts that will be needed in the sequel. For a point $x$ on a Riemannian orbifold $\Orb$ choose a neighborhood isometric to $M/G$ as in Definition \ref{dfn:orbifold} such that $G$ fixes a preimage $\bar x$ of $x$ in  $M$. It follows from Lemma \ref{lem:metric_det_group} that the linearized orthonormal action of $G$ on $T_{\bar x} M$ is uniquely determined up to conjugation by isometries. A representative of the corresponding conjugacy class in $\Or(n)$ is denoted as $G_x$ and is called \emph{local group} of $\Orb$ at $x$. The metric quotient $(T_{\bar x} M)/G$ is the tangent space $T_x \Orb$ of $\Orb$ at $x$. It coincides with the tangent cone of $\Orb$ at $x$ in the sense of metric geometry (cf. \cite{MR2117451}). The exponential map on $T_{\bar x} M$ is $G$-equivariant and descends to an exponential map $\exp_x$ defined on an open neighborhood of the origin $x_o$ of $T_x \Orb$.

Points with trivial local group are called \emph{regular}. The set of regular points is dense in $\Orb$. All other points are called \emph{singular}. If $\Orb$ is a global quotient $M/\Gamma$ of a Riemannian manifold by a (possible infinite) discrete group $\Gamma$, then we call points on $M$ regular or singular with respect to the projection $M\To M/\Gamma$ if they project to regular or singular points on $\Orb$, respectively.

Recall that a ball $\cB_r(x)$ in a Riemannian manifold $M$ is called \emph{totally convex} if it is normal and, in addition, every pair of points in $\cB_r(x)$ can be connected by a unique minimizing geodesic in $M$ that is contained in $\cB_r(x)$. Totally convex balls always exist \cite[Prop.~4.2]{MR1138207}. We call balls $B_r(\bar x)$ and $U_r(\bar x)$ in a Riemannian orbifold $\Orb$ \emph{normal} or \emph{totally convex}, respectively, if it is contained in a neighborhood isometric to $M/G$ as in Definition \ref{dfn:orbifold} where $G$ fixes a preimage $x$ of $\bar x$ in $M$, and the ball $\cB_r(x)$ is normal or totally convex, respectively. If $B_r(\bar x)$ is normal then any orbifold geodesic starting at $\bar x$ exists and is distance minimizing up to length $r$. If $\cB_r(\bar x)$ is totally convex in the above sense, then $\cB_r(\bar x)$ is also totally convex in the sense that any pair of points in $\cB_r(\bar x)$ can be connected by a distance minimizing geodesic and that any such distance minimizing geodesic in $\Orb$ is contained in $\cB_r(\bar x)$.

\section{Metric characterization of orbifold coverings}\label{sec:metric_cover}

In this section we prove Theorem \ref{thm:covering_prop}. Recall that given a metric space $(Y,d)$ we call its restriction to a subspace $Z\subset Y$ the \emph{restricted metric} on $Y$, and the metric on $Y$ which measures the infimum of the lengths of paths between two points in $Y$ the \emph{induced length metric}. Also recall that the restriction of a submetry $f:X\To Y$ to the preimage $f^{-1}(Z)$ of any subset $Z\subseteq Y$ is a submetry with respect to the restricted metrics, and that the composition of two submetries is again a submetry. This implies that a map $p : \Orb \To Y$ satisfies condition $(ii)$ of Theorem \ref{thm:covering_prop} if it is a \emph{discrete} submetry, i.e. a submetry with discrete fibers. It is also easy to see that condition $(ii)$ of Theorem \ref{thm:covering_prop} is satisfied if $Y$ is an $n$-dimensional Riemannian orbifold and $p : \Orb \To Y$ is a covering of orbifolds. We will show the converse by induction on the dimension. Moreover, at the end of this section we show that both conditions imply that the map $p$ is a weak submetry (see Lemma \ref{lem:weak_submetry_implied}), and  a submetry if $\Orb$ is complete. This will complete the proof of Theorem \ref{thm:covering_prop}.

To prove that the map $p:\Orb \To Y$ is a covering of Riemannian orbifolds under the assumptions of Theorem \ref{thm:covering_prop}, $(ii)$, we draw on results about submetries between Alexandrov spaces from Lytchak's thesis \cite{MR1938523}. For the definition and background on Alexandrov spaces we refer the reader to e.g. \cite{MR1185284,MR1835418}. We will simply use that a Riemannian orbifold, although it is in general neither complete nor does it have a lower curvature bound, is locally an Alexandrov space, and that the range of a submetry inherits this property from its domain \cite[Prop.~4.4]{MR1938523}. Moreover, let us mention that the tangent space at a point $x$ in an Alexandrov space $X$ can be defined as the cone over the completion of the space of unit speed geodesics starting at $x$ endowed with a certain ``angle'' metric.
In \cite{MR2117451,MR1938523} a notion of differentiability is introduced for Lipschitz maps between metric spaces. Using this notion it is shown in \cite[Prop.~5.1]{MR1938523} that for any submetry $f:X\To Y$ between Alexandrov spaces $X$ and $Y$ and any $x\in X$ there exists a homogenous submetry $Df_x: T_xX \To T_{f(x)}Y$ (the differential of $f$ at $x$) with the property that if $\gamma$ and $p\circ\gamma$ are geodesics starting at $x$ and $f(x)$, respectively, then $Df_x$ maps $\gamma'(0)$ to $(p\circ\gamma)'(0)$. Moreover, in \cite[Prop.~9.1]{MR1938523} it is shown that the submetry $f$ has discrete fibers if and only if $X$ and $Y$ have the same dimension.

In order to prove that $p:\Orb\To Y$ is a covering of Riemannian orbifolds under the assumptions stated in Theorem \ref{thm:covering_prop}, $(ii)$, we particularly have to show that for any $y \in Y$ and any $x\in p^{-1}(y)$ there exists some $R>0$ such that the restriction $p:U_R(x) \To U_R(y)$ is of the form $M/H \To M/G$ as in Definition \ref{dfn:riem_orb_covering}. Once this is proven the claim immediately follows in the case where $p$ has finite fibers. Otherwise we additionally have to show that the radius $R$ can be chosen independently of the point $x$ in the fiber.

We first deal with the local problem. We need the following lemma.

\begin{lem}\label{lem:normal} Let $y\in Y$ and $x\in p^{-1}(y)$. Suppose that the ball $B_R(x)$ is normal and that the distance of $x$ to any other element in $p^{-1}(y)$ is at least $4R$. Then $p: B_R(x)\To B_R(y)$ is a submetry and $p$ maps $S_r(x)$ onto $S_r(y)$ for $r<R$. Moreover, geodesics starting at $y$ exist and are minimizing up to length $R$, and each point in $U_R(y)$ can be connected to $y$ by a unique minimizing geodesic.
\end{lem}
\begin{proof} Since $p$ is a submetry, our assumption on $R$ implies that $p^{-1}(B_R(y))\cap B_{2R}(x) = B_{R}(x)$. It follows that the restriction $p: B_R(x)\To B_R(y)$ is a submetry. Moreover, the submetry property applied at $x$ and at points in $S_r(x)$ shows that for $r<R$ the distance $r$-sphere $S_r(x)$ is mapped onto $S_r(p(x))$.

To prove the second part of the lemma we note that a possibly smaller ball $B_{R'}(x)$ is totally convex so that $p$ restricts to a submetry $p: B_{R'}(x)\To B_{R'}(y)$ between Alexandrov spaces. In particular, we can lift initial directions at $y$ to initial directions at $x$ via the homogeneous submetry $Df_x: T_x \Orb \To T_{f(x)}Y$. Since the ball $B_R(x)$ is normal by assumption, geodesics starting at $x$ exist and are minimizing up to length $R$. Since $S_r(x)$ is mapped onto $S_r(p(x))$ for $r<R$, their images in $B_R(y)$ are minimizing up to length $R$ as well. Hence, in each initial direction at $y$ there exists a minimizing geodesic up to length $R$. Since geodesics in Alexandrov spaces do not branch, all geodesics starting at $y$ are of this type up to length $R$. Moreover, since a point $z$ in $U_R(y)$ can be lifted to a point in $U_R(x)$, it lies on a minimizing geodesic starting at $y$. Finally, the existence of two distinct such minimizing geodesics would imply a contradiction to the minimizing property up to length $R$, since one would find shortcuts in this case nearby $z$.
\end{proof}

We also need the following statement which may be of independent interest.

\begin{lem}\label{lem:restricted_submetry} Let $f:X \To Y$ be a submetry between metric spaces and suppose that $X$ is proper, i.e. all its closed bounded subsets are compact. Then for any closed subset $Z\subset Y$ the restriction of $f$ to $f^{-1}(Z)$ is a submetry with respect to the induced length metrics on $f^{-1}(Z)$ and on $Z$.
\end{lem}
\begin{proof} Clearly, the restriction of $f$ to $f^{-1}(Z)$ is $1$-Lipschitz with respect to the induced length metrics. It remains to show that for any $x\in f^{-1}(Z)$, any $r>0$ and any $y\in B_r(f(x))\subset Z$ there exists some $x' \in B_r(x)\subset f^{-1}(Z)$ with $f(x')=y$. Here we mean the balls with respect to the induced length metrics. For such a $y$ and any $n >0$ there exists some $1$-Lipschitz curve $\gamma_n: [0,r+1/n] \To Z$ with $\gamma(0)=f(x)$ and $\gamma(r+1/n)=y$. Since $X$ is proper by assumption there exists a $1$-Lipschitz curve $\bar{\gamma}_n: [0,r+1/n] \To B_{r+1/n}(x)\subset f^{-1}(Z)$ with $f\circ \bar{\gamma}_n = \gamma_n$ by \cite[Lem.~4.4]{MR2167848}. Let $x'_n \in B_{r+1/n}(x)$ be the endpoint of the curve $\bar{\gamma}_n$. By properness of $X$ and the fact that $f^{-1}(Z)\subset X$ is closed, a subsequence of $(x'_n)$ converges to some $x' \in B_r(x) \subset f^{-1}(Z)$, with $f(x')=y$ by continuity.
\end{proof}

Now we can settle the local problem.
\begin{prp}\label{prp:orb_covering_char} Suppose $p:\Orb \To Y$ has the properties stated in Theorem \ref{thm:covering_prop}, $(ii)$, and that $p$ has finite fibers. Then $p$ is a covering of Riemannian orbifolds in the sense of Definition \ref{dfn:riem_orb_covering}.
\end{prp}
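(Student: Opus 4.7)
The plan is to proceed by induction on the dimension $n$. The base case $n=1$ is immediate: a $1$-dimensional Riemannian orbifold is locally isometric to an open interval or to the half-line $[0,\infty)$ (the quotient of an interval by a reflection), and a discrete submetry between such local models is readily checked to have the quotient form required by Definition \ref{dfn:riem_orb_covering}.

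For the inductive step, I fix $y \in Y$ and $x \in p^{-1}(y)$, and use Lemma \ref{lem:normal} to choose $R>0$ so that $B_R(x)$ is normal and $p : B_R(x) \To B_R(y)$ is a submetry. I pick a chart $B_R(x) \cong M/H$ as in Definition \ref{dfn:orbifold}, with $H$ a finite group of isometries fixing a preimage $\tilde x \in M$ of $x$; shrinking $M$, we may assume it is a normal $H$-invariant ball in the ambient manifold. The composition $f : M \To M/H \cong B_R(x) \To B_R(y)$ is then a submetry between Alexandrov spaces with finite fibers, so by \cite[Prop.~9.1]{MR1938523} the target $B_R(y)$ is $n$-dimensional.

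Next, I pass to tangent cones. By \cite[Prop.~5.1]{MR1938523} the differential $Df_{\tilde x}: \R^n = T_{\tilde x}M \To T_y Y$ is a homogeneous submetry with discrete fibers, and its restriction to unit spheres is a submetry $S^{n-1} \To \Sigma_y Y$ with discrete fibers between $(n-1)$-dimensional Alexandrov spaces. Regarding $S^{n-1}$ as a Riemannian orbifold, the induction hypothesis identifies $\Sigma_y Y$ as an $(n-1)$-dimensional Riemannian orbifold and exhibits the restriction as an orbifold covering. Since $S^{n-1}$ is simply connected, Theorem \ref{thm:existence_universal_orbifold} ensures this covering is universal and hence Galois, giving $\Sigma_y Y \cong S^{n-1}/G_0$ for some finite subgroup $G_0 < \Or(n)$. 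The linearized $H$-action on $S^{n-1}$ preserves the fibers of this covering, because $S^{n-1} \To \Sigma_y Y$ factors through $S^{n-1}/H$, so $H$ embeds as a subgroup of the deck transformation group $G_0$. Consequently $T_y Y \cong \R^n/G_0$ and $Df_{\tilde x}$ is, up to conjugation, the standard quotient projection.

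To recover the metric structure of $B_R(y)$, I would transport the $G_0$-action from $T_{\tilde x}M$ to $M$ via the exponential map and then establish two facts: that $f$ is $G_0$-invariant, and that the fibers of $f$ are precisely the $G_0$-orbits. For the first, given $v \in T_{\tilde x}M$ and $g \in G_0$, the curves $t \mapsto f(\exp_{\tilde x}(tv))$ and $t \mapsto f(\exp_{\tilde x}(tgv))$ are geodesics in $Y$ emanating from $y$ with the common initial direction $Df_{\tilde x}(v) = Df_{\tilde x}(gv)$, so nonbranching of geodesics in Alexandrov spaces forces them to agree. For the second, if $f(\tilde x_1) = f(\tilde x_2)$, then the unique minimizing geodesic from $y$ to this common image provided by Lemma \ref{lem:normal} lifts to geodesics in $M$ ending at $\tilde x_1$ and $\tilde x_2$ whose initial directions lie in a common $G_0$-orbit, forcing $\tilde x_1$ and $\tilde x_2$ themselves to be $G_0$-related. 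Combining these two facts with Lemma \ref{lem:metric_det_group} then yields an isometry $M/G_0 \cong B_R(y)$, and the inclusion $H < G_0$ produces the commuting diagram of Definition \ref{dfn:riem_orb_covering}. Since $p$ has only finitely many preimages of $y$, applying the same argument around each preimage and passing to a sufficiently small $R$ so that $p^{-1}(B_R(y))$ splits into one connected component around each preimage completes the inductive step.

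The principal obstacle is verifying that the $G_0$-action transported to $M$ by the exponential map is genuinely by isometries of the given Riemannian metric on $M$, and relatedly that the induced factorization of $f$ through $M/G_0$ produces an honest isometry $M/G_0 \To B_R(y)$ rather than merely a $1$-Lipschitz bijection. This is the step at which the infinitesimal identification $T_y Y \cong \R^n/G_0$ must be upgraded to a local metric identification, and it is where the rigidity supplied by Lemma \ref{lem:metric_det_group}, together with Lytchak's structure theory for submetries between Alexandrov spaces, becomes essential.
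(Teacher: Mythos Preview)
Your induction scheme and the appeal to Theorem~\ref{thm:existence_universal_orbifold} at the sphere level mirror the paper's proof, but you apply the inductive hypothesis to the \emph{tangent} sphere $S^{n-1}\to\Sigma_y Y$, whereas the paper applies it to the \emph{metric} distance spheres $S_r(x)\to S_r(p(x))$ in $M$. That choice is exactly what produces the obstacle you flag at the end. The group $G_0<\Or(n)$ you extract acts isometrically on the round sphere, but transporting it to $M$ via $\exp_{\tilde x}$ gives no a priori reason for the action to preserve the Riemannian metric on $M$, since the exponential map is not an isometry. Your invocation of Lemma~\ref{lem:metric_det_group} does not close the gap either: that lemma compares two quotients by groups already assumed to act isometrically, and at this point $B_R(y)$ has not been exhibited as such a quotient.

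The paper sidesteps the difficulty as follows. After reducing (via Lemma~\ref{lem:group_dete_deck_trans}) to the case where $\Orb=M$ is a manifold and $p^{-1}(y)=\{x\}$, it uses Lemma~\ref{lem:normal} together with \cite[Lem.~4.4]{MR2167848} to see that $p:S_r(x)\to S_r(p(x))$ is a finite-fibered submetry for the \emph{intrinsic} metrics. Induction then makes this a Galois orbifold covering, and its deck group $G$ acts isometrically on $S_r(x)$ \emph{with the metric induced from $M$} by the very definition of deck transformations. One checks that $G$ and the action are independent of $r$, so the radial extension via $\exp_x$ gives a continuous action on $B_R(x)$ that is already isometric on every concentric sphere and trivial in the radial direction; isometry on the whole ball then reduces to smoothness, which is immediate in spherical coordinates since the conjugated action reads $(v,r)\mapsto(g\cdot v,r)$. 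Running the induction on the actual distance spheres rather than on the linearized model is the missing idea that makes the isometric nature of the group action automatic instead of something to be salvaged afterwards.
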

\begin{proof}
The proof is by induction on the dimension $n$ of $\Orb$. We will prove the base case $n=1$ of the induction along the way. As pointed out above it suffices to show that for any $y \in Y$ and any $x\in p^{-1}(y)$ there exists some $R>0$ such that the restriction $p:U_R(x) \To U_R(y)$ is of the form $M/H \To M/G$ as in Definition \ref{dfn:riem_orb_covering}. Fixing such $x$ and $y$, by Lemma \ref{lem:group_dete_deck_trans} we can assume that $\Orb$ is a manifold $M$ and that $p$ is a submetry. We choose $R$  so that the ball $B_R(x)$ is normal and so that the distance of $x$ to any other element in $p^{-1}(y)$ is at least $4R$. Then by Lemma \ref{lem:normal} the restriction $p:B_R(x) \To B_R(p(x))$ is a submetry and for $0<r<R$ the map $p$ restricts to $p: S_{r}(x)\To S_{r}(p(x))$. For $n=1$ the space of directions at $y$, and thus all the $S_{r}(p(x))$, consists either of two points or of one point. In the first case the restriction $p:U_R(x) \To U_R(y)$ is an isometry, and in the  second case it is the quotient map for an isometric reflection at $x$. In particular, the base of our induction argument holds.

Suppose that $n>1$ and that the induction assumption holds in all lower dimensions. Since $B_R(x)$ is proper, the restrictions $p: S_{r}(x)\To S_{r}(p(x))$ are submetries with respect to the induced intrinsic length metrics by Lemma \ref{lem:restricted_submetry}, and thus coverings of Riemannian orbifolds by our induction assumption. We claim that there exists a finite group $G$ which acts isometrically on $S_{r}(x)$ so that $p$ is the quotient map for this action. For $n>2$ this follows immediately from Theorem \ref{thm:existence_universal_orbifold} and Lemma \ref{lem:regular_cover_quotient_metric} since $S_{r}(x)$ is a simply connected manifold in this case. For $n=2$ one can e.g. observe that the orbifold fundamental group of $S_{r}(x)$ is a normal subgroup of $S_{r}(p(x))$ so that the covering $p: S_{r}(x)\To S_{r}(p(x))$ is Galois, or carry out the construction by hand. Since $B_R(x)$ is normal, we can extend the action of $G$ via the exponential map to a continuous action on $B_R(x)$. By Lemma \ref{lem:normal} the group $G$ and the thus obtained action on $B_R(x)$ does not depend on the specific value of $r<R$ we started with. 

We claim that $G$ acts isometrically on $B_R(x)$. It suffices to show that $G$ acts isometrically on $B_R(x) \backslash \{x\}$. Since $G$ acts isometrically on each $S_r(x)$, $0<r<R$, and preserves distances in radial directions, the claim follows if we can show that the action of $G$ on $B_R(x) \backslash \{x\}$ is smooth. By the same reason the action restricted to a distance sphere $S_r(x)$ is smooth. We conjugate the action of $G$ with the smooth exponential map to obtain an action on $T_x M$. Then in spherical coordinates the map induced by an element $g\in G$ is of the form $S^{n-1}\times \R_{>0} \To S^{n-1}\times \R_{>0}$, $(x,r)\mapsto (g(x),r)$, which is clearly smooth. Hence, $p:U_R(x) \To U_R(y)$ is the quotient map for the isometric action of $G$ on $U_R(x)$. This completes the proof of the proposition.
\end{proof}

Now we drop the assumption that $p:\Orb \To Y$ has finite fibers, i.e. we only assume that $p$ has discrete fibers, is onto, locally $1$-Lipschitz and that each point $y \in Y$ has a neighborhood $U$ such that the restriction of $p$ to $p^{-1}(U)$ is a submetry with respect to the restricted metrics as stated in Theorem \ref{thm:covering_prop}. By Proposition \ref{prp:orb_covering_char} we can assume that $Y$ is a Riemannian orbifold $\Orb'$. Moreover, by its proof, in order to prove that $p$ is still a covering of Riemannian orbifolds, it suffices to show that for any $y \in \Orb'$ there exists some $r>0$ such that for any $x \in p^{-1}(y)$ the balls $B_r(y)$ and $B_r(x)$ are normal. For, in this case, perhaps after further decreasing $r$, for any $x\in p^{-1}(y)$ the points $x$ and $y$ satisfy the assumptions of Lemma \ref{lem:normal}. This will be proved in Lemma \ref{lem:sub_geo_to_geo_2}.

\begin{lem}\label{lem:sub_geo_to_geo_local} Let $y\in \Orb'$ and $x \in \Orb$ with $p(x)=y$. There exists some $r>0$ for which any distance minimizing geodesic $\bar \gamma:[0,r] \To \Orb'$ with $\bar \gamma(0)=y$ can be lifted to a distance minimizing geodesic $\gamma$ on $\Orb$ with $\gamma(0)=x$. Moreover, given any $v\in T_x \Orb$ with $D_x p(v) = \bar \gamma'(0)$ the lift $\gamma$ can be chosen such that $\gamma'(0)=v$.
\end{lem}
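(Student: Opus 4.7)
My plan is to reduce the problem to the already-proven finite-fiber case by restricting to a very good ball around $x$, and then carry out a standard geodesic lift through the common manifold cover provided by the resulting local orbifold covering.

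First, I fix $R>0$ so that $\cB_R(x)\subset\Orb$ is very good. Because $p$ has discrete fibers globally and $\overline{\cB_R(x)}$ is compact (normal balls being relatively compact), each fiber of the restriction $p:\cB_R(x)\to\cB_R(y)$ is a closed discrete subset of a compact set, hence finite. This restricted map clearly inherits the hypotheses of Theorem~\ref{thm:covering_prop}, $(ii)$, so Proposition~\ref{prp:orb_covering_char} applies and exhibits it as a covering of Riemannian orbifolds. By Definition~\ref{dfn:riem_orb_covering}, upon shrinking $R$ to some $r>0$ this local covering takes the standard form: there exist a normal manifold ball $\cB_r(\tilde y)\subset M$ and finite groups $H\leq G$ of isometries of $\cB_r(\tilde y)$ fixing $\tilde y$, together with isometries $\cB_r(x)\cong\cB_r(\tilde y)/H$ and $\cB_r(y)\cong\cB_r(\tilde y)/G$, under which $p$ becomes the canonical projection $\cB_r(\tilde y)/H\to\cB_r(\tilde y)/G$. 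I further shrink $r$ using Lemma~\ref{lem:normal} so that $\bar\gamma([0,r])\subset\cB_r(y)$.

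With this local model in place, the lift is routine. I pick any $\tilde v\in T_{\tilde y}M$ representing the $G$-orbit $\bar\gamma'(0)\in T_{\tilde y}M/G=T_y\Orb'$; then $\tilde\gamma(t):=\exp_{\tilde y}(t\tilde v)$ is a minimizing geodesic in the normal manifold ball whose projection to $\cB_r(\tilde y)/G$ coincides with $\bar\gamma$, by uniqueness of geodesics with prescribed initial velocity on the orbifold $\cB_r(y)$. Its projection $\gamma$ to $\cB_r(\tilde y)/H=\cB_r(x)$ is then a minimizing geodesic with $\gamma(0)=x$ and $p\circ\gamma=\bar\gamma$, giving the first assertion. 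For the second, note that under the same identifications $T_x\Orb=T_{\tilde y}M/H$ and $T_y\Orb'=T_{\tilde y}M/G$, Lytchak's differential $D_xp$ agrees with the induced quotient map. Hence the condition $D_xp(v)=\bar\gamma'(0)$ means that the $H$-orbit representing $v$ is contained in the $G$-orbit of $\bar\gamma'(0)$, so one can select the representative $\tilde v$ in the previous step to lie inside this $H$-orbit; with this choice $\gamma'(0)=v$ as required.

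The main subtlety lies not in the lift itself but in setting up the framework: one needs both the finite-fiber reduction (via the compactness-plus-discreteness argument) and the identification of the abstract Lytchak differential $D_xp$ with the explicit quotient map $T_{\tilde y}M/H\to T_{\tilde y}M/G$. The latter is forced on the regular stratum, where $p$ is an honest Riemannian covering and $D_xp$ is therefore a genuine differential of an isometric projection; both maps then extend continuously from this dense subset to the whole tangent cones.
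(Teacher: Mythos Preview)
Your argument is correct but takes a different route from the paper's. The paper's proof is essentially a one-liner: on a very good ball the restriction is a submetry between Alexandrov spaces, and Lytchak's Lemma~5.4 from \cite{MR1938523} directly supplies a lift of $\bar\gamma$ with any prescribed initial direction $v\in (D_xp)^{-1}(\bar\gamma'(0))$; the lift is then minimizing by the submetry property. You instead bootstrap from Proposition~\ref{prp:orb_covering_char}: restricting to a compact very good ball forces the fibers to be finite, so the already-proven finite-fiber case yields the explicit local model $M/H\to M/G$, and the lift is then constructed by hand through the common manifold cover $M$. Your route is longer but more self-contained, since it avoids invoking a further external result from Lytchak's thesis and uses only machinery the paper has already built (indeed, the paper itself invokes exactly this finite-fiber reduction in the paragraph preceding the lemma to conclude that $Y=\Orb'$ is an orbifold). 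One small point: your justification that $D_xp$ coincides with the canonical quotient $T_{\tilde y}M/H\to T_{\tilde y}M/G$ via ``continuous extension from the regular stratum'' is vague as stated, because $D_xp$ is an object at the single (possibly singular) point $x$; a cleaner argument uses the defining property of the differential directly --- it sends $\gamma'(0)$ to $(p\circ\gamma)'(0)$ whenever both are geodesics --- which in your local model immediately gives $D_xp([\tilde v]_H)=[\tilde v]_G$.
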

\begin{proof} We choose $R>0$ such that the ball $\cB_R(x) \subset \Orb$ is totally convex and such that the assumptions of Lemma \ref{lem:normal} are satisfied. In particular, for all $r\leq R$ the restrictions $p:\cB_r(x) \To \cB_r(p(x))$ are submetries between Alexandrov spaces of curvature bounded from below in this case. Then for any geodesic $\bar \gamma : [0,r] \To \Orb$ with $\bar\gamma(0)=y$, and any $v \in T_x \Orb$ with $D_x p(v) = \bar \gamma'(0)$, there exists a lift $\gamma$ of $\bar \gamma$ to $\Orb'$ with $\gamma'(0)=v$ by \cite[Lem.~5.4]{MR1938523}. By the submetry property also the lift $\gamma$ of $\bar \gamma$ is distance minimizing as claimed.
\end{proof}

 We will need the following completeness property, which is immediate if $\Orb$ is complete.

\begin{lem}\label{lem:sub_geo_to_geo_ext_op} Suppose the restriction of a distance minimizing geodesic $\bar \gamma:[0,s] \To \Orb'$ to $[0,s)$ lifts to a distance minimizing geodesic $\gamma: [0,s) \To \Orb$. Then $\gamma$ can be extended to a lift of $\bar \gamma$ on $[0,s]$.
\end{lem}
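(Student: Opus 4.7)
The plan is to produce a candidate limit point $x_s\in\Orb$ lying over $y:=\bar\gamma(s)$, show $\gamma(t)\to x_s$ as $t\to s$, set $\gamma(s):=x_s$, and verify that the extended curve is still a distance minimizing lift of $\bar\gamma$. The two ingredients I expect to need are the local submetry property of $p$ near $y$ furnished by hypothesis (ii) of Theorem \ref{thm:covering_prop}, and the discreteness of the fibre $p^{-1}(y)$, which together will force the various fibrewise approximations of a preimage of $y$ to stabilise to a single point.

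In detail: choose an open neighborhood $U$ of $y$ in $\Orb'$ for which $p:p^{-1}(U)\To U$ is a submetry in the restricted metrics, and pick $\eta>0$ small enough that $\bar\gamma([s-\eta,s])\subset U$ and also $\cB_{s-t}(\bar\gamma(t))\subset U$ for every $t\in[s-\eta,s)$; then $\gamma(t)\in p^{-1}(U)$ on this interval and the restricted-metric balls around $\bar\gamma(t)$ of radius $s-t$ coincide with the ambient orbifold balls. Applying the submetry property at $\gamma(t)$ with radius $s-t$ yields, for each such $t$, a point $x^t\in p^{-1}(U)$ with $p(x^t)=y$ and $d_\Orb(\gamma(t),x^t)\leq s-t$. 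Fix any $t_0\in[s-\eta,s)$, set $x_s:=x^{t_0}$, and use the discreteness of $p^{-1}(y)$ to pick $\rho>0$ with $\cB_\rho(x_s)\cap p^{-1}(y)=\{x_s\}$. Since $\gamma$ is distance minimizing on $[0,s)$, the triangle inequality gives, for $t\in[t_0,s)$,
\[ d_\Orb(x^t,x_s)\leq d_\Orb(x^t,\gamma(t))+d_\Orb(\gamma(t),\gamma(t_0))+d_\Orb(\gamma(t_0),x_s)\leq (s-t)+(t-t_0)+(s-t_0)\leq 2(s-t_0). \]
Choosing $t_0$ close enough to $s$ that $2(s-t_0)<\rho$ forces $x^t=x_s$ for every $t\in[t_0,s)$, whence $d_\Orb(\gamma(t),x_s)\leq s-t\to 0$. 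Setting $\gamma(s):=x_s$ makes $\gamma$ continuous on $[0,s]$ with $p\circ\gamma=\bar\gamma$, and $d_\Orb(\gamma(t),\gamma(s))=\lim_{t'\to s}(t'-t)=s-t$ shows the extended curve is distance minimizing, hence a geodesic in the length space $\Orb$.

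The main obstacle is the step where one ``closes'' the family $\{x^t\}$ of candidate preimages into a single limit point; a priori the submetry produces a different $x^t$, potentially in a different sheet of the fibre, for each $t$, and it is precisely the discreteness of $p^{-1}(y)$ combined with the Cauchyness of the family $\gamma(t)$ (exploited through the triangle inequality above) that rules this out. A minor bookkeeping issue along the way is to ensure that the submetry statement, which is formulated in the restricted metric on $p^{-1}(U)$, really delivers the ambient estimate $d_\Orb(\gamma(t),x^t)\leq s-t$; this is handled by shrinking $U$ so that the small balls involved lie entirely inside $U$ and the two metrics coincide on them.
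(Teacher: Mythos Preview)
There is a genuine gap at the crux of your argument. You first fix $t_0$, set $x_s:=x^{t_0}$, and choose the isolation radius $\rho$ of $x_s$ in the discrete fibre $p^{-1}(y)$; but then you write ``Choosing $t_0$ close enough to $s$ that $2(s-t_0)<\rho$''. This is circular: changing $t_0$ changes $x^{t_0}$ and hence $\rho$, so the existence of a $t_0$ with $2(s-t_0)<\rho(x^{t_0})$ is an unproved assertion. More substantively, your strategy amounts to showing that the Cauchy family $\{x^t\}\subset p^{-1}(y)$ is eventually constant; but since $d(\gamma(t),x^t)\to 0$, this stabilisation is \emph{equivalent} to the convergence of $\gamma(t)$ in $\Orb$, which is precisely the content of the lemma. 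In an incomplete space a Cauchy sequence lying in a closed discrete set need not stabilise (think of the sequence $1/n$ inside the discrete closed set $\{1/m:m\in\N\}\subset(0,1)$), so discreteness of the fibre together with the triangle inequality cannot by itself close the loop.

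The paper's proof avoids this circularity by a different mechanism. It also produces a single candidate $z\in p^{-1}(\bar\gamma(s))$ from one application of the submetry property, but then shows $\gamma(t)\to z$ via a shortcut argument that does not use discreteness at all: assuming $z$ is not the limit, one locates the last time $s'\in[s/2,s)$ at which $d(\gamma(s'),z)=s-s'$, and then uses the local Riemannian orbifold structure of $\Orb$ around the interior point $\gamma(s')$ --- a small normal ball with compact distance sphere $S_\varepsilon(\gamma(s'))$ --- to find a point realising the infimum of $d(\gamma(s'),u)+d(u,z)$ off the curve $\gamma$. This yields a broken path from $\gamma(0)$ to $z$ that can be shortened below length $s$, contradicting $d(\gamma(0),z)=s$. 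The compactness input is thus taken at $\gamma(s')$, where it is available, rather than at the putative limit, where your argument implicitly needs it.
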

\begin{proof}Let $r>0$ such that the restriction $p_X$ of $p$ to $X:=p^{-1}(\cB_r(\bar \gamma(s))$ is a submetry. We can assume that $\gamma$ is contained in $X$. Set $x=\gamma(0)$ and $y=p(x)$. Since $p_X$ is a submetry and $\bar \gamma$ is distance minimizing, there exists some $z\in X$ with $d(\gamma(s/2),z)=s/2$ and $p(z)=\bar{\gamma}(s)$. By the same reason we have $d(x,z)=s$. We claim that $z$ is the limit of $\gamma(t)$ as $t$ tends to $s$. Suppose this is not the case. Then there exists some $s'\in [s/2,s)$ maximal with the property that $d(\gamma(s/2),z)=d(\gamma(s/2),\gamma(s'))+d(\gamma(s'),z)$. Since $\Orb$ is a length space, for some small normal ball $\cB_{\varepsilon}(\gamma(s'))\subseteq X$, $\varepsilon < s-s'$, we have that $d(\gamma(s'),z)=\mathrm{inf}_{u \in S_{\varepsilon}(\gamma(s'))} (d(\gamma(s'),u)+d(u,z))$ where $S_{\varepsilon}(\gamma(s'))$ is the distance $\varepsilon$-sphere at $\gamma(s')$. By compactness the infimum is attained, say at $u_0 \in S_{\varepsilon}(\gamma(s'))$. By maximality of $s'$ the point $u_0$ does not lie on $\gamma$. But this implies $d(x,z)<s$ since we can cut short at $\gamma(s')$, a contradiction. It follows that $\gamma$ can be extended to a lift of $\bar \gamma$.
\end{proof}

Now we can strengthen the statement of Lemma \ref{lem:sub_geo_to_geo_local}.

\begin{lem}\label{lem:sub_geo_to_geo_ext_cl} Let $y\in \Orb'$ and fix $r>0$ such that the restriction of $p$ to $p^{-1}(\cB_r(y))$ is a submetry. Then the conclusion of Lemma \ref{lem:sub_geo_to_geo_local} holds for any $x\in p^{-1}(y)$ with this $r$. 
\end{lem}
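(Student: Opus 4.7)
The plan is a standard maximal-extension argument. Let $\bar\gamma : [0,r] \To \Orb'$ be a distance-minimizing geodesic with $\bar\gamma(0) = y$, and fix $v \in T_x \Orb$ with $D_x p(v) = \bar\gamma'(0)$. I would consider the set $S \subseteq [0,r]$ of those $s$ for which $\bar\gamma|_{[0,s]}$ admits a distance-minimizing lift $\gamma_s : [0,s] \To \Orb$ with $\gamma_s(0) = x$ and $\gamma_s'(0) = v$, and aim to prove that $s_0 := \sup S$ equals $r$ and lies in $S$. Lemma \ref{lem:sub_geo_to_geo_local} guarantees $[0,\varepsilon] \subseteq S$ for some $\varepsilon > 0$, so $s_0 > 0$.

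Since $\Orb$ is locally an Alexandrov space and geodesics there with matching initial data cannot branch, any two lifts $\gamma_s$ and $\gamma_{s'}$ with $s < s'$ in $S$ must agree on the shorter interval. Hence the family $\{\gamma_s\}_{s \in S}$ assembles into a well-defined distance-minimizing lift $\gamma : [0, s_0) \To \Orb$. Because $\bar\gamma$ stays inside $\cB_r(y)$, the curve $\gamma$ stays in $p^{-1}(\cB_r(y))$, on which $p$ is a submetry by hypothesis, so Lemma \ref{lem:sub_geo_to_geo_ext_op} extends $\gamma$ continuously to a lift on $[0, s_0]$. The $1$-Lipschitz property of $p$ yields $d(x, \gamma(s_0)) \geq s_0$, while continuity gives $L(\gamma|_{[0,s_0]}) = s_0$; together these force $\gamma|_{[0,s_0]}$ to be distance-minimizing, so $s_0 \in S$.

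To exclude $s_0 < r$, I would apply Lemma \ref{lem:sub_geo_to_geo_local} at $\gamma(s_0) \in p^{-1}(\cB_r(y))$ to the shifted geodesic $t \mapsto \bar\gamma(s_0 + t)$, using the velocity $\gamma'(s_0)$ as a lift of $\bar\gamma'(s_0)$. This produces a distance-minimizing lift $\tilde\gamma : [0,\delta] \To \Orb$ starting at $\gamma(s_0)$ with initial direction $\gamma'(s_0)$. The concatenation $\gamma \ast \tilde\gamma$ lifts $\bar\gamma|_{[0, s_0 + \delta]}$ and has length $s_0 + \delta$, and since $\bar\gamma$ is distance-minimizing on all of $[0, r]$, the same $1$-Lipschitz estimate shows the concatenation is distance-minimizing. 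By non-branching, it is a single geodesic with initial direction $v$, contradicting the maximality of $s_0$.

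The main obstacle is this extension step: one must verify that gluing $\gamma$ and $\tilde\gamma$ produces a genuine geodesic with the correct initial data rather than a broken path, and that distance-minimization is preserved globally on $[0, s_0+\delta]$, not merely on each of the two pieces. Non-branching in the Alexandrov setting handles the former, while the $1$-Lipschitz bound against the length of $\bar\gamma|_{[0, r]}$ handles the latter. Once these two points are in place the maximal-extension argument closes and the lemma follows.
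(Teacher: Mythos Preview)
Your argument is correct and follows essentially the same approach as the paper: the paper phrases it as an open--closed (connectedness) argument on the set of $s\in[0,r]$ admitting a lift with the prescribed initial data, invoking Lemma~\ref{lem:sub_geo_to_geo_ext_op} for closedness and Lemma~\ref{lem:sub_geo_to_geo_local} for openness, while you unfold this into the equivalent supremum/maximal-extension formulation. The only cosmetic difference is that the paper does not carry the distance-minimizing condition in the definition of the set $S$ but deduces it at the end from the submetry property, whereas you verify it at each step; both are fine.
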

\begin{proof} 
Let $\bar \gamma:[0,r] \To \Orb'$ be a distance minimizing geodesic with $\bar \gamma(0)=y$, and let $v\in T_x \Orb$ with $D_x p(v) = \bar \gamma '(0)$. The set of all $s\in [0,1]$ for which there exists a lift $\gamma:[0,s]\To \Orb$ of $\bar \gamma_{|[0,s]}$ with $\gamma(0)=x$ and $\gamma'(x)=v$ is closed by Lemma \ref{lem:sub_geo_to_geo_ext_op} and open in $[0,r]$ by Lemma \ref{lem:sub_geo_to_geo_local}. Hence, the lift $\gamma$ exists on $[0,r]$. Since the restriction of $p$ to $p^{-1}(B_r(y))\supseteq B_r(x)$ is a submetry and $\bar \gamma$ is distance minimizing, the path $\gamma$ is distance minimizing, too.
\end{proof}

As a consequence, we can prove the following lemma.

\begin{lem}\label{lem:sub_pre_normal} Let $\cB_{r}(y)\subset \Orb'$ be a normal ball so that the restriction of $p$ to $p^{-1}(\cB_{r}(y))$ is a submetry. Then for any $x\in p^{-1}(y)$ geodesics starting at $x$ exist and minimize up to length $r$. 
\end{lem}
\begin{proof} Since $B_{r}(y)$ is normal, for any $x\in p^{-1}(y)$ and any $v\in T_x \Orb$ there exists a minimizing geodesic $\bar \gamma: [0,r] \To \Orb'$ with $\gamma(0)=y$ and $\gamma'(0)=D_x p(v)$. By Lemma \ref{lem:sub_geo_to_geo_ext_cl} this geodesic can be lifted to a minimizing geodesic $\gamma: [0,r] \To \Orb$ with $\gamma(0)=x$ and $\gamma(0)=v$. Hence, by uniqueness of geodesics with given initial conditions, geodesics starting at $x$ exist and minimize up to length $r$.

\end{proof}

We can apply Lemma \ref{lem:sub_pre_normal} to submetries between Riemannian orbifolds as follows. 

\begin{lem}\label{lem:sub_geo_to_geo_2} Let $\cB_{2r}(y)\subset \Orb'$ be a normal ball so that the restriction of $p$ to $p^{-1}(\cB_{2r}(y))$ is a submetry. 
Then for any $x\in p^{-1}(y)$ the ball $B_r(x)$ is normal.
\end{lem}
\begin{proof} Pick some $x\in p^{-1}(y)$. By Lemma \ref{lem:sub_pre_normal} the exponential map is nonsingular on $U_{2r}([0]) \subset T_{x} \Orb=\R^n/H_x$. 
Thus we can pull-back the metric from $\Orb$ to a Riemannian metric on $U_{2r}(0)\subset \R^n$ with respect to which the ball $B_r(0)$ is normal. We claim that $H_{x}$ acts isometrically on $U_{2r}(0)$ with respect to this metric, and that it induces an isometry between $U_{2r}(0)/H_x$ and $U_{2r}(x)$ with respect to the induced length metrics. Since $H_{x}$ leaves the fibers of the exponential map $U_{2r}(0)\To \Orb$ invariant, it acts isometrically on the regular part in $U_{2r}(0)$ which is open and dense. Now our claim follows by similarly as in the last part of the proof of Lemma \ref{lem:metric_det_group}. Hence, the ball $B_r(x)$ is normal as claimed.
\end{proof}

By what has been said after Proposition \ref{prp:orb_covering_char} and by the preceding lemma the map $p:\Orb \To Y$ is a covering of Riemannian orbifolds. In order to complete the proof of Theorem \ref{thm:covering_prop} it remains to prove the following lemma.

\begin{lem}\label{lem:weak_submetry_implied} A map $p: \Orb \To \Orb'$ that satisfies conditions $(i)$ and $(ii)$ in Theorem \ref{thm:covering_prop} is a weak submetry. Moreover, it is a submetry if $\Orb$ is complete.
\end{lem}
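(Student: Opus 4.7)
The plan is to break the proof into three steps. First, I would note that $p$ is actually globally $1$-Lipschitz, not just locally so: any rectifiable curve in $\Orb$ can be decomposed via a finite cover of its image by $1$-Lipschitz neighborhoods, so it projects to a curve in $\Orb'$ of no greater length, and since both spaces are length spaces this forces $d_{\Orb'}(p(a),p(b))\leq d_\Orb(a,b)$. This gives the trivial inclusions $p(\oB_r(x))\subseteq\oB_r(p(x))$ and $p(\cB_r(x))\subseteq\cB_r(p(x))$, reducing the problem to the reverse inclusions.

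For the weak submetry property I would fix $x\in\Orb$ and $y\in\oB_r(p(x))$, and use that $\Orb'$ is a length space to pick a rectifiable curve $\bar\gamma:[0,1]\To\Orb'$ from $p(x)$ to $y$ of length $L(\bar\gamma)<r$. Condition $(ii)$ provides an open cover of the compact image $\bar\gamma([0,1])$ by neighborhoods $V_\alpha\subseteq\Orb'$ on which $p:p^{-1}(V_\alpha)\To V_\alpha$ is a submetry; a Lebesgue number argument then yields a subdivision $0=t_0<\dots<t_N=1$ with $\bar\gamma([t_{k-1},t_k])\subseteq V_{\alpha_k}$ for each $k$. I would then construct a sequence of preimages $x_0=x,x_1,\dots,x_N$ with $p(x_k)=\bar\gamma(t_k)$ inductively: having $x_{k-1}$, the submetry property of $p:p^{-1}(V_{\alpha_k})\To V_{\alpha_k}$ at $x_{k-1}$ with radius $s_k:=d_{\Orb'}(\bar\gamma(t_{k-1}),\bar\gamma(t_k))$ produces $x_k$ with $d_\Orb(x_{k-1},x_k)\leq s_k$. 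Summing and using $\sum_k s_k\leq L(\bar\gamma)<r$ gives $d_\Orb(x,x_N)<r$, so $x_N\in p^{-1}(y)\cap\oB_r(x)$, as required.

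Finally, to upgrade weak submetry to submetry when $\Orb$ is complete, I would invoke that a complete Riemannian orbifold is a complete, locally compact length space (locally a finite quotient of a Riemannian manifold), and hence by the Hopf--Rinow--Cohn-Vossen theorem its closed bounded subsets are compact. Given $y\in\cB_r(p(x))$, I would approximate by points $y_n\To y$ with $d(p(x),y_n)<r+1/n$, lift via the weak submetry property to $x_n\in\oB_{r+1/n}(x)\subseteq\cB_{r+1}(x)$ with $p(x_n)=y_n$, and extract a convergent subsequence $x_{n_j}\To x^*$. Continuity of $p$ gives $p(x^*)=y$, and passing to the limit in $d(x,x_{n_j})<r+1/n_j$ gives $d(x,x^*)\leq r$. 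The main subtlety, already resolved in the second step, is that we never need to lift $\bar\gamma$ as a continuous path (which would be delicate through singular strata) but only to lift its finitely many sample points, a task the local submetry hypothesis handles directly via iterated point-wise ball lifting.
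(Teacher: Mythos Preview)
Your proof is correct, and the first and third steps match the paper's argument almost verbatim (global $1$-Lipschitz from locally $1$-Lipschitz between length spaces; Hopf--Rinow compactness of closed balls in the complete case to pass from weak submetry to submetry). Incidentally, in your third step you may simply take $y_n=y$ for all $n$, since $d(p(x),y)\le r<r+1/n$ already places $y$ in $\oB_{r+1/n}(p(x))$.

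The second step, however, takes a genuinely different route from the paper. The paper first straightens $\bar\gamma$ to a \emph{piecewise distance-minimizing} path by choosing totally convex balls around points of its image satisfying the covering condition of Definition~\ref{dfn:riem_orb_covering}, and then invokes Lemma~\ref{lem:sub_geo_to_geo_ext_cl} to lift each geodesic segment to a geodesic in $\Orb$, obtaining a continuous lifted path of the same length. Your argument bypasses geodesic lifting entirely: you use only the raw local-submetry hypothesis of condition~$(ii)$ and a Lebesgue-number subdivision to lift \emph{points} $\bar\gamma(t_k)$ one at a time via the closed-ball equality $p(\cB_s(x_{k-1}))=\cB_s(p(x_{k-1}))$. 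This is more elementary and more robust --- it does not rely on the orbifold-covering structure of condition~$(i)$ or on Lemma~\ref{lem:sub_geo_to_geo_ext_cl}, and, as you observe, it sidesteps the delicate question of continuous path-lifting through singular strata. The paper's approach, on the other hand, actually produces a lifted path and so gives slightly more (a curve in $\Orb$ realising the length bound), which can be useful in other contexts.
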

\begin{proof} We have already observed that the map $p$ is $1$-Lipschitz as a locally $1$-Lipschitz map between length spaces. To recognize it as a weak submetry we have to show that for any points $x\in \Orb$ and $x',y' \in \Orb'$ with $p(x)=x'$ and $d(x',y')=r$,  and any $\varepsilon>0$ there exists a point $y \in \Orb$ with $p(y)=y'$ and $d(x,y)< r+ \varepsilon$. 

So let $\varepsilon >0$ and let $\bar \gamma: [0,r'] \To \Orb'$ be a $1$-Lipschitz path with $\bar \gamma(0)=x'$, $\bar \gamma(r')=y'$ and length $L(\bar \gamma)<d(x',y')+ \varepsilon$. By compactness there exists a finite subdivision $0=t_0<t_1<\cdots<t_k=r$ and radii $r_i$, $i=0,\ldots,k$, such that each ball $B_{r_i}(\bar \gamma(t_i))$ is totally convex and satisfies the condition in Definition \ref{dfn:riem_orb_covering}, and such that $\bar \gamma([0,t_1]) \subseteq B_{r_0}(\bar \gamma(0))$, $\bar \gamma([t_{i-1},t_{i+1}]) \subseteq B_{r_i}(\bar \gamma(t_i))$, $i=1,\ldots,k-1$, and $\bar \gamma([t_{k-1},r']) \subseteq B_{r_k}(\bar \gamma(r'))$ holds. Therefore, we can assume that the restriction of $\bar  \gamma$ to each interval $[t_i,t_{i+1}]$, $i=0,\ldots,k-1$, is distance minimizing. By Lemma \ref{lem:sub_geo_to_geo_ext_cl} we can lift $\bar \gamma$ to a $1$-Lipschitz path $\gamma: [0,r'] \To \Orb$ with the same property and length $\leq d(x',y')+ \varepsilon$. The point $y=\gamma(r')$ has the desired properties and so the first claim follows.

If $\Orb'$ is complete, then its closed, bounded balls are compact by the Hopf-Rinow theorem for length spaces \cite[Thm.~2.5.28]{MR1835418}. In this case a subsequence of the points $y'(\varepsilon)$ in the argument above converge to a point $y'$ with $p(y')=y$ and $d(x',y')=r$. Hence, $p$ is a submetry in this case as claimed.
\end{proof}

\section{The metric double covering} \label{sec:double_covering}

The aim of this section is to prove Proposition \ref{prp:metric_double}. Let us explain the required notions first. The \emph{metric double} of a metric space $X$ along a closed subspace $Y\subset X$ is defined as the quotient of the disjoint union of two copies $X_1$ and $X_2$ of $X$ by the equivalence relation that identifies copies of elements $y\in Y$ in $X_1$ and $X_2$, endowed with the unique maximal metric that is majorized by the metrics on the two copies $X_1$ and $X_2$ of $X$ in $2_Y X$ \cite[3.1.24]{MR1835418}. Equivalently, this metric can be described as a \emph{gluing metric} where the distance between two points $x,y \in 2_Y X$ is defined as the infimum of
\[
			\sum_{i=0}^{k} d(x_i,y_i)
\] 
over all sequences $x_i,y_i$, $i=0,\ldots,k$ with $x=x_0$, $y=y_k$, $x_i,y_i \in X_j$, for $j=1$ or $j=2$, and $[x_i]=[y_{i+1}]$ in $2_Y X$, $i=0,\ldots,k-1$ \cite[3.1.12, 3.1.27]{MR1835418}. If $X$ is in addition a length space, then so is $2_Y X$ \cite[3.1.24]{MR1835418}.

We apply this construction in case of a Riemannian orbifold $X=\Orb$ and $Y$ being the closure of its codimension $1$ stratum $\Sigma_1 \Orb$. Recall that $\Sigma_1 \Orb$ is the set of points in $\Orb$ whose local group $G_x$ is generated by a single reflection. In the following we refer to this closure as the \emph{boundary} of $\Orb$ and denote it as $\partial \Orb$ since it coincides with the boundary of $\Orb$ in the sense of Alexandrov geometry (see \cite{MR1185284}) by the following lemma. 

\begin{lem}\label{lem:charact_codim_1_stratum}
A point $x$ on an orbifold $\Orb$ belongs to the closure of $\Sigma_1 \Orb$ if and only if its local group $G_x$ contains a reflection.
\end{lem}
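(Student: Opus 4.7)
The plan is to argue in a normal chart around $x$ of the form $M/G$ with $M$ a small ball in $T_x \Orb$ on which $G = G_x$ acts orthogonally, fixing the origin $\bar x$ that projects to $x$. For any point $v \in M$, the local group at the projection $\pi(v)$ is (a representative of the conjugacy class of) the isotropy group $G_v < G$, acting on $T_v M = T_x M$ via its linear action. A point $\pi(v)$ therefore lies in $\Sigma_1\Orb$ precisely when $G_v$ is generated by a single reflection of $T_x M$.

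For the forward implication, let $x_n \to x$ with $x_n \in \Sigma_1 \Orb$. For $n$ large we can lift each $x_n$ to a point $\tilde x_n \in M$ with $\tilde x_n \to \bar x$, and by the previous remark the isotropy $G_{\tilde x_n}$ is generated by a reflection $r_n \in G$. Since $G$ is finite, after passing to a subsequence we may assume $r_n = r$ is constant, whence $r \in G_x$ is a reflection.

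For the reverse implication, suppose $G_x$ contains a reflection $r$, and let $H = \mathrm{Fix}(r) \subset M$ be the corresponding hyperplane through $\bar x$. The key observation is that for every $g \in G \setminus \{e,r\}$, the intersection $\mathrm{Fix}(g) \cap H$ is a \emph{proper} affine subspace of $H$: if instead $g$ were to fix $H$ pointwise, then as an orthogonal transformation $g$ would act by $\pm 1$ on the line orthogonal to $H$, forcing $g \in \{e, r\}$, a contradiction. Since $G$ is finite, the union $\bigcup_{g \in G \setminus \{e,r\}} (\mathrm{Fix}(g) \cap H)$ has empty interior in $H$, so there exist points $v \in H$ arbitrarily close to $\bar x$ with $G_v = \langle r \rangle$. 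Their images $\pi(v)$ lie in $\Sigma_1 \Orb$ and converge to $x$, showing $x \in \overline{\Sigma_1 \Orb}$.

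The main (very minor) obstacle is making sure the identification of the local group at $\pi(v)$ with the isotropy $G_v$ is set up correctly, and verifying that reflections in $G_v$ are reflections in the ambient sense; both follow because $G$ acts linearly orthogonally on $M \subset T_x\Orb$, so the derivative of $g \in G_v$ at $v$ equals $g$ itself. With this in hand the two directions above close the argument.
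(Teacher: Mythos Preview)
Your proof is correct and follows exactly the approach the paper sketches in one line (``reduce to $\R^n/G_x$; continuity and dimension reasons''): you spell out the dimension argument on the fixed hyperplane $H$ for the reverse implication and the isotropy inclusion $G_{\tilde x_n}\subset G_x$ for the forward one. Note that in the forward direction the sequence and subsequence are unnecessary---a single nearby $\Sigma_1$-point already hands you a reflection in $G_x$.
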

\begin{proof} The claim can be reduced to the case in which $\Orb=\R^n/G_x$. In this case the only if direction follows by continuity. Conversely, suppose that $G:=G_x$ contains a reflection $g$. Since $G$ is finite and since the fixed point subspace of any nontrivial element in $G$ distinct from $g$ does not contain the one of $g$ by dimension reasons, it follows that there exists a sequence of points $x_i \in \R^n$ converging to $x$ such that $G_{x_i}=\left\langle g \right\rangle$. Hence, the images of the $x_i$ in $\Orb$ belong to $\Sigma_1 \Orb$ and converge to the image of $x$, i.e. $x$ is contained in the closure of $\Sigma_1 \Orb$.
\end{proof}

Recall the statement of Proposition \ref{prp:metric_double}: We want to prove that the metric double $2_{\partial \Orb} \Orb$ is a Riemannian orbifold and that the natural projection $2_{\partial \Orb} \Orb \To \Orb$ is a covering of Riemannian orbifolds. 

\begin{proof}[Proof of Proposition \ref{prp:metric_double}] The proof is by induction on the dimension. The claim is local and it is clear for points in $2\Orb = 2_{\partial \Orb} \Orb$ that do not project to $\partial \Orb$. Let $x \in 2 \Orb$ be a point that projects to $\partial \Orb$. According to Definition \ref{dfn:orbifold} we can assume that $\Orb$ is of the form $M/G$, the quotient of a Riemannian manifold ball by a finite group of isometries $G$ that fixes a preimage $\hat x$ of $x$ in $M$. In particular, for $n=1$ the underlying topological space of $\Orb$ is $[0,1)$ and in this case the claim is clear.

Fix some dimension $n>1$ and suppose that the claim is true in all lower dimensions. Let $S_{\hat x}$ be the unit sphere in the tangent space $T_{\hat x} M$. Then $S_{\hat x}/G$ is a Riemannian orbifold with $\R_{\geq 0}\cdot (\partial (S_{\hat x}/G)) = \partial (T_{\hat x} M/G)$. By induction assumption the metric double $2(S_{\hat x} x/G)$ is a covering orbifold of $S_{\hat x}/G$. Therefore, there exists an index $2$ subgroup $H$ of $G$ such that the identity on $S_{\hat x}/G$ lifts to an isometry $\theta: 2(S_{\hat x}/G) \To S_{\hat x}/H$. For $n=2$ this follows from the classification of finite subgroups of $\Or(2)$ and for $n>2$ it follows from Theorem \ref{thm:existence_universal_orbifold}. In particular, $\theta$ is equivariant with respect to the natural reflection of $2(S_{\hat x}/G)$ and the action of $G/H\cong \Z_2$ on $S_{\hat x}/H$.

Applying the exponential map yields an equivariant homeomorphism between small metric balls $\theta : 2\Orb \supset B_r(x) \To B_r(y) \subset M/H$ where $y$ is the coset of $\hat x$ in $M/H$. By construction this map has the property that $\theta(B_{r'}(x))=B_{r'}(y)$ holds for all $r' \in [0,r]$ and it descends to the identity map on $B_r(x) \subset \Orb$ which is an isometry. Since all metrics involved are length metrics, it follows that the map $\theta$ restricts to an equivariant isometry $\theta : 2\Orb \supset B_{r/9}(x) \To B_{r/9}(y) \subset M/H$ (for the elementary but tedious details we refer to Appendix \ref{appendix}). 
This shows that the projection from $2\Orb$ to $\Orb$ is a covering of Riemannian orbifolds and so the claim follows by induction.\end{proof}

\section{appendix}\label{appendix}

To fill in the details for our claim that the map $\theta : 2\Orb \supset B_{r/9}(x) \To B_{r/9}(y) \subset M/H$ in the proof of Proposition \ref{prp:metric_double} is an isometry, we first state some simple facts about the metric double of a metric space $(X,d)$ along a closed subspace $Y$. In the following we denote the two copies of $X$ sitting in $2_Y X$ by $X_1$ and $X_2$.

\begin{lem}\label{lem:connecting_path_double}
Let $\gamma:[0,1] \To 2_Y X$ be a path connecting points $\gamma(0) \in X_1$ and $\gamma(1) \in X_2$. Then the set $M=\{t\in [0,1]|\gamma(t) \in Y\}\subset [0,1]$ is a nonempty union of closed intervals.
\end{lem}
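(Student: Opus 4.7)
The plan is to reduce the conclusion to two elementary observations: that $M$ is closed in $[0,1]$ and that $M$ is nonempty. Once these are in hand, the stated structure is automatic, since every closed subset of $[0,1]$ is the disjoint union of its connected components, each of which is a closed interval (possibly degenerate to a single point).

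The preliminary step I would carry out is to check the topology of $2_Y X$ with respect to its gluing metric; concretely, that the subsets $X_1 \setminus Y$ and $X_2 \setminus Y$ are open (so that $Y = 2_Y X \setminus ((X_1 \setminus Y) \cup (X_2 \setminus Y))$ is closed). Fix $x_1 \in X_1 \setminus Y$ and set $\delta := d_X(x_1, Y) > 0$ (positive since $Y$ is closed in $X$). For $r < \delta$ I claim that the open ball $\oB_r(x_1) \subset 2_Y X$ lies entirely in $X_1 \setminus Y$. Indeed, for $z \in X_1$ the gluing distance equals $d_X(x_1,z)$, so $z \in Y$ would force $d_X(x_1, Y) \leq r < \delta$; and for $z \in X_2 \setminus Y$, the gluing distance formula yields $d(x_1, z) \geq \inf_{y \in Y} d_X(x_1, y) = \delta > r$.

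Given this, the main argument is immediate. Closedness of $M$ follows from continuity of $\gamma$ and closedness of $Y$. For nonemptiness, if $\gamma(0) \in Y$ or $\gamma(1) \in Y$ there is nothing to show; otherwise $\gamma(0) \in X_1 \setminus Y$ and $\gamma(1) \in X_2 \setminus Y$, and the preimages $\gamma^{-1}(X_1 \setminus Y)$ and $\gamma^{-1}(X_2 \setminus Y)$ are disjoint, open in $[0,1]$, and contain $0$ and $1$ respectively; if $M$ were empty, they would cover $[0,1]$, contradicting its connectedness.

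The only real obstacle is the preliminary topological step, and it is a mild one: one simply needs to read off from the gluing distance formula \cite[3.1.12, 3.1.27]{MR1835418} that a point in $X_1$ away from $Y$ is at positive distance from every point in $X_2 \setminus Y$. After that, the conclusion is a purely formal consequence of the connectedness of $[0,1]$ and the structure theorem for closed subsets of the interval.
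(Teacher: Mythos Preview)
Your proof is correct and follows essentially the same approach as the paper's: both arguments reduce to the connectedness of $[0,1]$ together with the observation that every closed subset of $[0,1]$ is a union of closed intervals. The only cosmetic difference is that the paper phrases the connectedness step in terms of the closed sets $X_1$ and $X_2$ (whose preimages cover $[0,1]$ and hence must intersect), whereas you use the complementary open sets $X_1\setminus Y$ and $X_2\setminus Y$; you also spell out, via the gluing distance formula, the topological fact that the paper simply asserts (``$X_1$ and $X_2$ are closed in $2_Y X$ since $Y$ is closed in $X$'').
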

\begin{proof} The subspaces $X_1$ and $X_2$ are closed in $2_Y X$ since $Y$ is closed in $X$. Moreover, we have $2_Y X = X_1 \cup X_2$ and $Y=X_1 \cap X_2$. Hence, $[0,1]=\gamma^{-1}(X_1)\cup \gamma^{-1}(X_2)$ and so there is some $t \in [0,1]$ with $\gamma(t) \in X_1 \cap X_2 =Y$ since $[0,1]$ is connected. This shows that $M$ is non-empty. By continuity $M$ is also closed. Every closed subset of $[0,1]$ is a union of closed intervals.
\end{proof}

There is a natural reflection $s:2_Y X \To 2_Y X$ that interchanges the two copies $(X_1,d)$ and $(X_2,d)$ of $X$ in $(2_Y X,d_2)$ and fixes the subspace $Y=X_1\cap X_2\subset 2_Y X$ pointwise. The fact that it identifies $X_1$ and $X_2$ isometrically by definition implies that it is an isometry.

\begin{lem}\label{lem:double metric_estimate}
For two points $x,x' \in X_1$ we have $d(x,x')=d_2(x,x')\leq d_2(x,s(x'))$. In other words, the embedding $X_1 \hookrightarrow 2_Y X$ is isometric and the composition with the projection from $2_Y X$ to $2_Y X/s$ with the quotient metric is an isometry.
\end{lem}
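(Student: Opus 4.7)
The plan is to treat the three assertions in sequence: first the identity $d(x,x')=d_2(x,x')$ (isometric embedding), then the inequality $d_2(x,x')\leq d_2(x,s(x'))$, then the quotient-metric statement.

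For the isometric embedding, note that the trivial one-step sequence $(x_0,y_0)=(x,x')$ lying in $X_1$ is admissible and gives $d_2(x,x')\leq d(x,x')$. For the opposite inequality, take any admissible sequence $x=x_0,y_0,x_1,y_1,\ldots,x_k,y_k=x'$ with $y_i=x_{i+1}\in Y$. Since $Y\subset X$ under the identification of any copy, I can regard all the points as points of $X$, and repeated use of the triangle inequality in $X$ gives
\[
d(x,x')\;\leq\;\sum_{i=0}^{k}d(x_i,y_i).
\]
Taking the infimum over admissible sequences yields $d(x,x')\leq d_2(x,x')$, establishing equality.

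For the inequality $d_2(x,x')\leq d_2(x,s(x'))$, the key point is that the involution $s\colon 2_YX\to 2_YX$ is an isometry fixing $Y$ pointwise (a fact I would record at the outset by observing that the defining formula for $d_2$ is invariant under swapping the two copies). Given an admissible sequence from $x\in X_1$ to $s(x')\in X_2$, apply $s$ to every pair $(x_i,y_i)$ that lies in $X_2$. Since the transition points $y_i=x_{i+1}$ belong to $Y$ and are therefore fixed by $s$, the resulting sequence is again admissible; its length is unchanged, its starting point is still $x$, and its endpoint becomes $s(s(x'))=x'\in X_1$. Hence the new sequence is admissible from $x$ to $x'$ and has the same length as the original, so $d_2(x,x')\leq d_2(x,s(x'))$.

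For the quotient assertion, the natural map $X_1\hookrightarrow 2_YX\to 2_YX/s$ is a bijection, since every $s$-orbit meets $X_1$ and the only pairs of points in $X_1$ identified by $s$ are those lying in $Y$, which already coincide. The quotient metric on $2_YX/s$ induced by the isometric $\Z/2$-action is the minimum over representatives, so for $x,x'\in X_1$ it equals
\[
\min\bigl(d_2(x,x'),\,d_2(x,s(x'))\bigr)=d_2(x,x')=d(x,x')
\]
by the previous two steps. This shows the composition is distance-preserving, hence an isometry. I don't anticipate a genuine obstacle; the only subtlety is keeping the bookkeeping of admissible sequences straight when reflecting, and making sure the gluing points stay in $Y$ after reflection, which is automatic because $s$ fixes $Y$ pointwise.
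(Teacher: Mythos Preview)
Your proof is correct and follows essentially the same approach as the paper's: the paper also uses the one-step sequence (equivalently, the maximality characterization of $d_2$) for $d_2\leq d$, the reflection of $X_2$-segments into $X_1$ followed by the triangle inequality for $d\leq d_2$ and for $d_2(x,x')\leq d_2(x,s(x'))$. You spell out the quotient-metric consequence more explicitly than the paper does, but the underlying argument is the same.
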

\begin{proof} For any approximation $\sum_{i=0}^{k-1} d(x_i,x_{i+1})$ of $d_2(x,x')$ or $d_2(x,s(x'))$ we obtain the same approximation of $d(x,x')$ by mapping all the $x_i$ that lie in $X_2$ to $X_1$ via $s$. This shows $d_2(x,x')\leq d_2(x,s(x'))$, and $d(x,x')\leq d_2(x,x')$ by the triangle inequality. On the other hand, we have $d_2 \leq d$ by the characterization of $d_2$ as a maximal metric that is majorized by $d$ and so the claim follows.
\end{proof}

To prove that $\theta : 2\Orb \supset B_{r/9}(x) \To B_{r/9}(y) \subset M/H$ is an isometry we show that the metrics on $ B_{r/9}(x)$ and $B_{r/9}(y)$ satisfy the following properties. Let $s_G$ be the generator of $G/H$ acting on $M/H$. We denote the metrics on $2 \Orb$ and on $M/H$ by $d_2$ and $d_q$, respectively. In the subsequent lemma $(Z,z,d_Z,s_Z,\phi)$ may either be $(2\Orb, x,d_2,s, \mathrm{id})$ or $(M/H,y,d_q,s_G,\theta)$. 

\begin{lem}\label{lem:restricts_to_isometry_lem}
Let $(Z,d_Z)$ be a length space with an isometric involution $s_Z$. Suppose that there exists a homeomorphism $\phi: 2 \Orb \supset B_r(x)\To B_r(z)\subset Z$ with $\phi(x)=z\in Z_0:=\mathrm{Fix}(s_Z)$ that is $\Z_2$-equivariant with respect to the action of $s$ on $B_r(x)$ and the action of $s_Z$ on $B_r(0)$, and that is a radial isometry in the sense that $\phi(B_{r'}(x))=B_{r'}(z)$ holds for all $r'\in [0,r]$. Then the following holds true.
\begin{enumerate}
\item For $w,w' \in \phi(B_{r/3}(x)\cap X_1)$ we have
\[
		d_Z(w,w')\leq d_Z(w,s_Z(w')).
\]
\item For $w\in \phi(B_{r/3}(z)\cap X_1)$ and $w' \in \phi(B_{r/3}(z)\cap X_2)$ we have
\[
		d_Z(w,w')= \inf_{y'\in Z_0} \left( d_Z(w,y')+d_Z(y',w') \right).
\]
\end{enumerate}
\end{lem}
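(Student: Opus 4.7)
The plan is to prove both (i) and (ii) by the classical \emph{folding trick} for isometric involutions, using $\phi$ to transfer the connectivity information from $2\Orb$ to $Z$. The key structural fact I would record first is that $\phi(Y\cap B_r(x))=Z_0\cap B_r(z)$: this holds because $\phi$ is a homeomorphism intertwining $s$ and $s_Z$, and $Y\cap B_r(x)$ is exactly $\mathrm{Fix}(s)\cap B_r(x)$. Consequently, any continuous path $\alpha:[0,1]\to B_r(z)$ in $Z$ whose $\phi$-preimage joins $X_1$ to $X_2$ yields, via Lemma \ref{lem:connecting_path_double} applied to $\phi^{-1}\circ\alpha$, a parameter $t_0$ with $\alpha(t_0)\in Z_0$.

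For (i), starting from $w,w'\in\phi(B_{r/3}(x)\cap X_1)\subset B_{r/3}(z)$ we have $d_Z(w,s_Z(w'))<2r/3$. For any $\epsilon\in(0,2r/3)$ pick a rectifiable path $\alpha$ from $w$ to $s_Z(w')$ with $L(\alpha)<d_Z(w,s_Z(w'))+\epsilon$. Such an $\alpha$ must remain in $B_r(z)$: if some $\alpha(t_*)$ had $d_Z(\alpha(t_*),z)\ge r$, the reverse triangle inequality would give $d_Z(w,\alpha(t_*))>2r/3$ and $d_Z(\alpha(t_*),s_Z(w'))>2r/3$, forcing $L(\alpha)>4r/3$, a contradiction. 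The structural observation then supplies $t_0$ with $\alpha(t_0)\in Z_0$, and the folded path $\beta(t):=\alpha(t)$ for $t\le t_0$, $\beta(t):=s_Z(\alpha(t))$ for $t\ge t_0$, is continuous (since $\alpha(t_0)\in\mathrm{Fix}(s_Z)$), connects $w$ to $s_Z^2(w')=w'$, and satisfies $L(\beta)=L(\alpha)$ because $s_Z$ is an isometry. Sending $\epsilon\to 0$ yields $d_Z(w,w')\le d_Z(w,s_Z(w'))$.

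For (ii), $d_Z(w,w')\le\inf_{y'\in Z_0}(d_Z(w,y')+d_Z(y',w'))$ is just the triangle inequality. For the reverse I would repeat the previous argument almost verbatim: take an almost-minimizing $\alpha$ from $w$ to $w'$, confine it to $B_r(z)$ by the same length--radius estimate, extract $t_0$ with $y':=\alpha(t_0)\in Z_0$, and read off $d_Z(w,y')+d_Z(y',w')\le L(\alpha)<d_Z(w,w')+\epsilon$. The main obstacle I anticipate is simply the length-versus-radius estimate keeping near-minimizing paths inside $B_r(z)$ so that Lemma \ref{lem:connecting_path_double} can be invoked through $\phi^{-1}$; the factor $3$ between $r/3$ and $r$ built into the hypothesis is exactly what makes this quantitative bound work, after which the argument reduces to the topology of the metric double.
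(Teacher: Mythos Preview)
Your proof is correct and follows essentially the same approach as the paper: confine an almost-minimizing path to $B_r(z)$, pull back through $\phi^{-1}$ to invoke Lemma~\ref{lem:connecting_path_double} and find a crossing point in $Z_0$, then fold via $s_Z$. Your treatment is in fact a bit more explicit than the paper's---you spell out the length-versus-radius estimate that keeps the path inside $B_r(z)$, and in part~(ii) you read off the infimum bound directly from the original path rather than first folding it as the paper does---but the underlying argument is the same.
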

\begin{proof} $(i)$ Let $\gamma:[0,1] \To Z$ be a path connecting $w$ and $s_Z(w')$ whose length approximates $d_Z(w,s_Z(w'))$ up to some small $\varepsilon>0$. Because of $w,s_Z(w') \in B_{r/3}(z)$, we can assume that $\gamma$ is completely contained in $B_{r}(z)$. By Lemma \ref{lem:connecting_path_double} applied to $\phi^{-1}\circ \gamma$ there is some $t_0 \in [0,1]$ with $\gamma(t_0) \in \phi(Y\cap B_{r}(x)) \subset \mathrm{Fix}(s_Z)$. We define a new path $\tilde{\gamma}:[0,1]\To Z$ by $\tilde{\gamma}(t)=\gamma(t)$ for $t\in [0,t_0]$ and $\tilde{\gamma}(t)=s_Z(\gamma(t))$ for $t\in [t_0,1]$. The path $\tilde{\gamma}$ connects $w$ and $w'$ and has length $L(\tilde{\gamma})=L(\gamma)$ since $s$ is an isometry. Now the claim follows since $Z$ is a length space. 

$(ii)$ By the triangle inequality we have $d_Z(w,w') \leq\inf_{y'\in Z_0} \left( d_Z(w,y')+d_Z(y',w') \right)$. On the other hand, let $\gamma$ be a path connecting $w$ and $w'$ whose length approximates $d_Z(w,w')$. As above we can assume that $\gamma$ is completely contained in $B_{r}(z)$. Similarly as in $(i)$ we can use Lemma \ref{lem:connecting_path_double} to construct a path $\tilde{\gamma}$ connecting $w$ and $s_Z(w')$ that lies completely in $\theta(B_{r}(z)\cap X_1)$, intersects $Z_0$ and satisfies $L(\tilde{\gamma})=L(\gamma)$. Since $\tilde{\gamma}$ intersects $Z_0$, we have $L(\tilde{\gamma})\geq \inf_{y'\in Z_0} \left( d(w,y')+d(y',w') \right)$. The fact that $Z$ is a length space implies $d(w,w') \geq\inf_{y'\in Z_0} \left( d(w,y')+d(y',w') \right)$ and hence the claim follows.
\end{proof}

\begin{lem}\label{lem:restricts_to_isometry}
The map $\theta : 2\Orb \supset B_{r/9}(x) \To B_{r/9}(y) \subset M/H$ is an isometry as claimed in the proof of Proposition \ref{prp:metric_double}.
\end{lem}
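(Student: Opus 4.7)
The plan is to show $d_2(p, q) = d_q(\theta(p), \theta(q))$ for all $p, q \in B_{r/9}(x)$ by splitting into two cases depending on whether $p, q$ lie in the same copy of $\Orb$ inside $2\Orb$ or in opposite copies, and in each case combining Lemma \ref{lem:restricts_to_isometry_lem} with the Galois covering property of $\pi: M/H \to M/G = \Orb$.

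In the same-copy case, assume $p, q \in X_1 \cap B_{r/9}(x)$ (the case of $X_2$ will follow by the symmetry $s$). Lemma \ref{lem:double metric_estimate} will give $d_2(p, q) = d_\Orb(p, q)$. Since $\pi$ is a Galois orbifold covering with deck group $\langle s_G \rangle \cong \Z_2$, Lemma \ref{lem:regular_cover_quotient_metric} will yield
\[
d_\Orb(\pi(\theta(p)), \pi(\theta(q))) = \min\bigl(d_q(\theta(p), \theta(q)),\ d_q(\theta(p), s_G(\theta(q)))\bigr).
\]
Using that $\theta$ descends to the identity on $\Orb$, the left-hand side is $d_\Orb(p,q)$; applying Lemma \ref{lem:restricts_to_isometry_lem}(i) to $(M/H, y, d_q, s_G, \theta)$ at the points $\theta(p), \theta(q) \in \theta(B_{r/3}(x) \cap X_1)$ will show the minimum is attained by the first term, completing this case.

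In the opposite-copies case, assume $p \in X_1 \cap B_{r/9}(x)$ and $q \in X_2 \cap B_{r/9}(x)$. I will apply Lemma \ref{lem:restricts_to_isometry_lem}(ii) to both $(2\Orb, x, d_2, s, \mathrm{id})$ and $(M/H, y, d_q, s_G, \theta)$ to write
\[
d_2(p, q) = \inf_{z' \in Y}\bigl(d_2(p, z') + d_2(z', q)\bigr), \quad d_q(\theta(p), \theta(q)) = \inf_{w' \in \mathrm{Fix}(s_G)}\bigl(d_q(\theta(p), w') + d_q(w', \theta(q))\bigr).
\]
A triangle-inequality estimate based on $d_2(p, q) \leq 2r/9$ will show that both infima are unchanged if restricted to $Y \cap B_{r/3}(x)$ and $\mathrm{Fix}(s_G) \cap B_{r/3}(y)$ respectively. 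Since $\theta$ is a $\Z_2$-equivariant radial homeomorphism, it will identify these two indexing sets bijectively. For each $z'$ in this common set, both $(p, z')$ and $(q, z')$ are same-copy pairs in $B_{r/3}(x)$, so the first case will yield $d_2(p, z') = d_q(\theta(p), \theta(z'))$ and $d_2(q, z') = d_q(\theta(q), \theta(z'))$, matching the two infima term by term.

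The main technical obstacle is the careful calibration of radii. The factor $1/9$ is exactly what the triangle inequality needs in order to force every near-minimizing intermediate point $z'$ in the opposite-copies case to lie inside $B_{r/3}(x)$, where Lemma \ref{lem:restricts_to_isometry_lem}(i) is available and hence the same-copy case applies to each leg $(p,z')$ and $(q,z')$.
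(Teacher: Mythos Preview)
Your proposal is correct and follows essentially the same two-case strategy as the paper: the same-copy case via Lemma~\ref{lem:double metric_estimate}, the quotient-metric $\min$-formula, and Lemma~\ref{lem:restricts_to_isometry_lem}(i); the opposite-copies case via Lemma~\ref{lem:restricts_to_isometry_lem}(ii) on both sides, restriction of the infimum to $B_{r/3}$, and reduction to the first case on each leg. One small point: you state the same-copy case for $p,q\in B_{r/9}(x)$, but in the opposite-copies case you need it for pairs $(p,z')$ with $z'\in B_{r/3}(x)$; since your argument for the first case only uses Lemma~\ref{lem:restricts_to_isometry_lem}(i), which is valid on $B_{r/3}$, you should simply state and prove the same-copy case for $B_{r/3}$ from the outset (as the paper does).
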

\begin{proof} Let $z,z' \in B_{r/3}(x)\cap X_1$. By Lemma \ref{lem:double metric_estimate} we have $d(z,z')=d_2(z,z')=d(\overline{z},\overline{z}')$ where $\overline{z}$ and $\overline{z}'$ are the cosets of $z$ and $z'$ in $2 \Orb/s\cong \Orb$. Since $\theta$ descends to an isometry on $B_r(x)\subset \Orb$, the definition of the quotient metric on $M/G$ implies \[d_2(z,z')=\min \{d_q(\theta(z),\theta(z')),d_q(\theta(z),s_G(\theta(z')))\}.\] Now Lemma \ref{lem:restricts_to_isometry_lem}, $(i)$, shows that $d_2(z,z')=d_q(\theta(z),\theta(z'))$. By the same reason this identity holds for points $z,z' \in B_{r/3}(x)\cap X_2$.

Now let $z \in B_{r/9}(x)\cap X_1$ and $z' \in B_{r/9}(x)\cap X_2$. Then we have $d_2(z,z'),d_q(\theta(z),\theta(z'))\leq 2r/9$. Applying the first paragraph and Lemma \ref{lem:restricts_to_isometry_lem}, $(ii)$, in $B_r(x)$ and in $B_r(y)$ yields $d_2(z,z')=d_q(\theta(z),\theta(z'))$
which proves the claim.
\end{proof}

\end{document}